\def\A{\mathbb A}
\def\R{{\mathbb R}}
\def\N{{\mathbb N}}
\def\GG{{\mathcal G}}
\def\RR{{\mathcal R}}
\def\LL{{\mathcal L}}
\def\XX{{\mathcal X}}
\def\WW{{\mathcal W}}
\def\W{{\mathfrak{W}}}
\def\bE{\boldsymbol{E}}
\def\bF{\boldsymbol{F}}
\def\bG{\boldsymbol{G}}
\def\bH{\boldsymbol{H}}
\def\bRR{\boldsymbol{\RR}}
\def\bx{\boldsymbol{x}}
\def\by{\boldsymbol{y}}
\def\bz{\boldsymbol{z}}
\def\bZ{\boldsymbol{Z}}
\def\b1{\boldsymbol{1}}
\def\bB{{\boldsymbol{B}}}
\def\bW{{\boldsymbol{W}}}
\def\norm#1#2{\|#1\|_{#2}}
\def\set#1#2{\big\{#1\,:\,#2\big\}}
\def\eps{\varepsilon}
\def\normL2#1#2{\|#1\|_{L^2(#2)}}
\newcounter{constantsnumber}
\def\namec#1#2{%
 \ifthenelse{\equal{#1}{rel}}{C_{\rm rel}}{%
  \ifthenelse{\equal{#1}{mesh}}{C_{\rm mesh}}{%
  \ifthenelse{\equal{#1}{sz}}{C_{\rm sz}}{%
  \ifthenelse{\equal{#1}{dislocrel}}{C_{\rm dlr}}{%
  \ifthenelse{\equal{#1}{eff}}{C_{\rm eff}}{%
  \ifthenelse{\equal{#1}{main}}{C_{\rm V}}{%
  \ifthenelse{\equal{#1}{opt}}{C_{\rm opt}}{%
  \ifthenelse{\equal{#1}{normequiv}}{C_{\rm norm}}{%
  \ifthenelse{\equal{#1}{reliable}}{C_{\rm rel}}{%
  \ifthenelse{\equal{#1}{efficient}}{C_{\rm eff}}{%
  \ifthenelse{\equal{#1}{dlr}}{C_{\rm dlr}}{%
  \ifthenelse{\equal{#1}{stable}}{C_{\rm stab}}{%
  \ifthenelse{\equal{#1}{reduction}}{C_{\rm red}}{%
   \ifthenelse{\equal{#1}{unibound}}{C_{\rm hot}}{%
    \ifthenelse{\equal{#1}{hotConst}}{C_{\rm hot}}{%
   \ifthenelse{\equal{#1}{inverseK}}{C_{\rm K}}{%
  \ifthenelse{\equal{#1}{refined}}{C_{\rm ref}}{%
  \ifthenelse{\equal{#1}{estconv}}{C_{\rm est}}{%
  \ifthenelse{\equal{#1}{optimal}}{C_{\rm opt}}{%
  \ifthenelse{\equal{#1}{qo}}{C_{\rm qo}}{%
  \ifthenelse{\equal{#1}{mon}}{C_{\rm mon}}{%
  \ifthenelse{\equal{#1}{cea}}{C_{\mbox{\scriptsize C\'ea}}}{%
  \ifthenelse{\equal{#2}{newcounter}}{\refstepcounter{constantsnumber}\label{const#1}}{}C_{\ref{const#1}}}%
}}}}}}}}}}}}}}}}}}}}}}
\newcounter{contractionnumber}
\def\nameq#1#2{%
  \ifthenelse{\equal{#1}{reduction}}{q_{\rm red}}{%
  \ifthenelse{\equal{#1}{estconv}}{q_{\rm est}}{%
  \ifthenelse{\equal{#1}{cea}}{q_{\mbox{\scriptsize C\'ea}}}{%
  \ifthenelse{\equal{#2}{newcounter}}{\refstepcounter{contractionnumber}\label{contraction#1}}{}q_{\ref{contraction#1}}}%
}}}
\def\namer#1#2{%
  \ifthenelse{\equal{#1}{reduction}}{\rho_{\rm red}}{%
  \ifthenelse{\equal{#1}{estconv}}{\rho_{\rm est}}{%
  \ifthenelse{\equal{#1}{cea}}{\rho_{\mbox{\scriptsize C\'ea}}}{%
  \ifthenelse{\equal{#1}{qo}}{\rho_{\mbox{\scriptsize qo}}}{%
  \ifthenelse{\equal{#2}{newcounter}}{\refstepcounter{contractionnumber}\label{contraction#1}}{}\rho_{\ref{contraction#1}}}%
}}}}
\newtheorem{assumption}[theorem]{Assumption}
\newtheorem{remark}[theorem]{Remark}
\title{Towards optimal hierarchical training\\ of neural networks\thanks{Submitted to the editors DATE.
\funding{Funded by the Deutsche Forschungsgemeinschaft (DFG, German Research Foundation) -- Project-ID 258734477 -- SFB 1173, the Austrian Science Fund (FWF)
under the special research program Taming complexity in PDE systems (grant SFB F65) as well as projects I6667-N and P36150. Funding was received also from the European Research Council (ERC) under the
European Union’s Horizon 2020
research and innovation programme (Grant agreement No. 101125225).}}}
\author{Michael Feischl\thanks{Institute of Analysis and Scientific Computing, TU Wien 
  (\email{michael.feischl@tuwien.ac.at}).}
\and Alexander Rieder\thanks{Institute of Analysis and Scientific Computing, TU Wien 
  (\email{alexander.rieder@tuwien.ac.at}).}
\and Fabian Zehetgruber\thanks{Institute of Analysis and Scientific Computing, TU Wien 
  (\email{fabian.zehetgruber@tuwien.ac.at}).}}
\begin{document}
\maketitle
\begin{abstract}
    We propose a hierarchical training algorithm for standard feed-forward neural networks that adaptively extends the network architecture as soon as the optimization reaches a stationary point. By solving small (low-dimensional) optimization problems, the extended network provably escapes any local minimum or stationary point. Under some assumptions on the approximability of the data with \emph{stable} neural networks, we show that the algorithm achieves an optimal convergence rate $s$ in the sense that ${\rm loss}\lesssim \#{\rm parameters}^{-s}$. As a byproduct, we obtain computable indicators which judge the optimality of the training state of a given network and derive a new notion of generalization error.
\end{abstract}

\section{Introduction}
\noindent
One of the most pressing problems in deep learning is the question of how to design good networks for a given task and how to train them efficiently. 
The design of network architectures is often driven by intuition and/or experiment. This is particularly true for machine learning tasks in language modelling or computer vision and hence automated methods are in demand~\cite{nas_google}. 
Particularly in mathematical applications of deep learning such as function approximation~\cite{jentzen_approx,schwab_approx,schwab_approx2,schwab_approx3}, solving partial differential equations~\cite{deep_ritz0,deep_ritz1,weakan}, or inverse problems~\cite{schwab_inverse,berg_inverse}, we have the unique advantage of constructive approximation results that show that a certain architecture will, with the right weights, achieve a certain approximation quality. This even includes problems that are hard for classical approximation methods, such as high-dimensional problems~\cite{schwab_approx2}, fractals~\cite{fractals} or stochastic processes~\cite{jentzen_approx}. However, even if we know the optimal architecture, it is still an open question of how to find the optimal weights in reasonable compute time. The conventional approach of employing variants of (stochastic) gradient descend is known to {sometimes} converge to weights that result in networks with much worse approximation qualities than what is predicted from approximation theory. Examples of this gap can be found everywhere in the literature, even with mathematical justification~\cite{t2p-gap}. This is in stark contrast to classical approximation methods, such as interpolation, spectral methods, or finite element/volume methods. There, for many problem sets, algorithms are known that design optimal approximation architectures (e.g., the mesh for a finite element method) and also equip them with the optimal parameters (e.g., the coefficients of the polynomial on each mesh element). The design of the architecture and the determination of the parameters go hand-in-hand. One prominent example of this is the adaptive finite element method. First, a rough approximation $u_h$ to the exact solution $u$ of a given problem (usually a partial differential equation) is computed on a coarse mesh. Using this first approximation, the algorithm decides where to refine the mesh and updates the solution. This is repeated until the approximation is accurate enough. 
The optimality results known for such algorithms are of the following form: Assume that there exists a mesh (that, in general, can not be compute practically) on which a good approximation $u_h^{\rm opt}\approx u$ exists, then the algorithm will find a similarly good mesh with a comparable amount of degrees of freedom as the optimal mesh on which it computes an approximation $u_h$ of comparable quality, i.e.,
\begin{align*}
    \norm{u-u_h}{}\leq C\norm{u-u_h^{\rm opt}}{}\quad \text{and}\quad
    \text{compute-time}(u_h)\leq C\,\text{compute-time}(u_h^{\rm opt}),
\end{align*}
{for some constant $C>0$ that depends only on secondary characteristics of the method but is independent of the solution $u$, its approximation and the number of degrees of freedom}
For examples of such results, we refer to the seminal works~\cite{bdd,stevenson07,ckns} in which the Poisson problem was tackled. The ideas in these works where generalized to many other model problems, e.g.,~\cite{ks,cn} for conforming methods,~\cite{BeMao10,ncstokes3}    
for non-conforming methods, and~\cite{LCMHJX} for mixed formulations. Even, non-local problems such as integral equations~\cite{gantumur,fkmp} and fractional problems~\cite{frac}. For an overview of the proof techniques, we refer to~\cite{axioms}.

\medskip

The big advantage of this line of reasoning is that one does not need to know the approximability of the exact solution $u$ and can still guarantee that the approximation is (quasi-) optimal for a given computational budget.

\bigskip

The present work pursues the question whether similar results are possible for the training of deep neural networks and gives some promising first answers.
To that end, we propose an algorithm that hierarchically extends the network architecture by adding new neurons in each layer as illustrated in an exemplary fashion below. This is done in such a way that the realization of the network is unchanged at first, i.e., the new weights (in red and blue) are set to zero. Then, an optimal initialization of the new weights is computed by solving a small optimization problem. Under the assumption that a neural network of a certain size exists that strictly improves the 
loss function, we can show that this optimal initialization reduces the loss by a factor that is proportional to the optimal loss reduction if the best possible (but in general unknown) initialization would be chosen.

\vspace{2mm}
\begin{minipage}{0.31\linewidth}
\centering
\begin{tikzpicture}[
  node distance=1cm,
  neuron/.style={draw, circle, minimum size=0.2cm},
  input/.style={neuron,fill=green!30},
  hidden/.style={neuron,fill=green!30},
  output/.style={neuron,fill=green!30}
]
\node[input] (i1) {};
\node[input, below=of i1] (i2) {};
\node[input, below=of i2] (i3) {};

\node[hidden, right=of i1] (h1) {};
\node[hidden, below=of h1] (h2) {};
\node[hidden, below=of h2] (h3) {};

\node[hidden, right=of h1] (h11) {};
\node[hidden, below=of h11] (h12) {};
\node[hidden, below=of h12] (h13) {};

\node[output, right=of h12] (o1) {};

\foreach \i in {1,2,3} {
  \foreach \j in {1,2,3} {
    \draw[->] (i\i) -- (h\j);
  }
}

\foreach \i in {1,2,3} {
  \foreach \j in {1,2,3} {
    \draw[->] (h\i) -- (h1\j);
  }
}

\foreach \j in {1,2,3} {
  \draw[->] (h1\j) -- (o1);
}
\end{tikzpicture}
\end{minipage}
\begin{minipage}{0.31\linewidth}
\centering
\begin{tikzpicture}[
  node distance=0.7cm and 1cm,
  neuron/.style={draw, circle, minimum size=0.2cm},
  input/.style={neuron},
  hidden/.style={neuron},
  output/.style={neuron}
]
\node[input, fill=green!30] (i1) {};
\node[input, below=of i1, fill=green!30] (i2) {};
\node[input, below=of i2, fill=green!30] (i3) {};

\node[hidden, right=of i1, fill=green!30] (h2) {};
\node[hidden, above=of h2, fill=purple!30] (h1) {};
\node[hidden, below=of h2, fill=green!30] (h3) {};
\node[hidden, below=of h3, fill=green!30] (h4) {};
\node[hidden, below=of h4, fill=purple!30] (h5) {};

\node[hidden, right=of h1, fill=purple!30] (h11) {};
\node[hidden, below=of h11, fill=green!30] (h12) {};
\node[hidden, below=of h12, fill=green!30] (h13) {};
\node[hidden, below=of h13, fill=green!30] (h14) {};
\node[hidden, below=of h14, fill=purple!30] (h15) {};

\node[output, right=of h13, fill=green!30] (o1) {};

\foreach \i in {1,2,3} {
  \foreach \j in {1,2,3,4,5} {
    \draw[->] (i\i) -- (h\j);
  }
}

\foreach \i in {1,2,3,4,5} {
  \foreach \j in {1,2,3,4,5} {
    \draw[->] (h\i) -- (h1\j);
  }
}

\foreach \j in {1,2,3,4,5} {
  \draw[->] (h1\j) -- (o1);
}
\end{tikzpicture}
\end{minipage}
\begin{minipage}{0.31\linewidth}
\centering
\begin{tikzpicture}[
  node distance=0.5cm and 1cm,
  neuron/.style={draw, circle, minimum size=0.2cm},
  input/.style={neuron},
  hidden/.style={neuron},
  output/.style={neuron}
]
\node[input, fill=green!30] (i1) {};
\node[input, below=of i1, fill=green!30] (i2) {};
\node[input, below=of i2, fill=green!30] (i3) {};

\node[hidden, right=of i1, fill=green!30] (h3) {};
\node[hidden, above=of h3, fill=purple!30] (h2) {};
\node[hidden, above=of h2, fill=blue!30] (h1) {};
\node[hidden, below=of h3, fill=green!30] (h4) {};
\node[hidden, below=of h4, fill=green!30] (h5) {};
\node[hidden, below=of h5, fill=purple!30] (h6) {};
\node[hidden, below=of h6, fill=blue!30] (h7) {};

\node[hidden, right=of h2, fill=purple!30] (h11) {};
\node[hidden, below=of h11, fill=green!30] (h12) {};
\node[hidden, below=of h12, fill=green!30] (h13) {};
\node[hidden, below=of h13, fill=green!30] (h14) {};
\node[hidden, below=of h14, fill=purple!30] (h15) {};

\node[output, right=of h13, fill=green!30] (o1) {};

\foreach \i in {1,2,3} {
  \foreach \j in {1,2,3,4,5,6,7} {
    \draw[->] (i\i) -- (h\j);
  }
}

\foreach \i in {1,2,3,4,5,6,7} {
  \foreach \j in {1,2,3,4,5} {
    \draw[->] (h\i) -- (h1\j);
  }
}

\foreach \j in {1,2,3,4,5} {
  \draw[->] (h1\j) -- (o1);
}
\end{tikzpicture}
\end{minipage}

\vspace{2mm}
\noindent
To rigorously establish the loss reduction, we introduce the concept of \emph{stable networks}. The notion quantifies the amount of cancellation that is present in a network with given weights. Roughly speaking, we show that the existence of stable networks that achieve small loss guarantees that the proposed hierarchical training algorithm will find networks of similar size with similar loss. We argue that the notion of stability is necessary in the sense that if a certain loss functional can only be made small by very unstable networks (i.e., a small change in the weights results in a large change of the output), there is little hope for any training algorithm to find the correct weights for such a network. Practical experience also seems to suggest that this stability is inherent in many deep learning applications. For example, it is common practice to do inference or training of large networks in single precision~\cite{mixedprec} in order the increase performance (or even less bits per weight~\cite{onebit}).

As a corollary, we obtain a computable quantity that can be used to judge the training state of a given network. The quantity measures how far a network of a given size is from optimality, i.e., the smallest possible loss for this network architecture. This is similar to the well-known C\'ea lemma for Galerkin methods, which states that a computed approximation is, up to a factor, as good as the best possible approximation within the given setting.
The difference to the present setting is that we do not know the factor a~priori, but compute a quantity that is related to this factor. In that sense, we have an a~posteriori type C\'ea lemma for training neural networks.

Finally, we show that our hierarchical training algorithm results in improved estimates on the generalization error compared to direct algorithms. Moreover, we introduce the concept of \emph{optimal generalization} which turns out to be necessary in order to achieve small generalization error and allows us to make statements about the generalization error without quantifying the distance of training data and possible inputs of the network. This is particularly interesting as current machine learning applications such as Large Language Models {(e.g.,\cite{gpt4})} demonstrate remarkable generalization over inputs that are far away from the training data.

\subsection{Contributions of this work}

\begin{itemize}
    \item We propose a hierarchical training algorithm (Algorithm~\ref{alg:adaptive}) that increases the network size during training and (under certain assumptions) achieves a asymptotically quasi-optimal loss for a given number of trainable parameters (Theorem~\ref{thm:main}). Roughly speaking, this means that if for some $\beta>0$ and all $n\in\N$ there exist networks that achieve a loss smaller than $n^{-\beta}$ with a number of trainable parameters of order $n$, then Algorithm~\ref{alg:adaptive} will find a network with $n$ parameters that achieves a loss of $Cn^{-\beta}$ (or with some reduced rate $\widetilde \beta$ in some cases).
    
    \item We propose a computable indicator that quantifies how well a network with a given number of parameters is trained and how much improvement of the loss can be achieved by increasing the size of the network (Theorem~\ref{thm:cea}).
    
    \item We show that the hierarchical training algorithm (Algorithm~\ref{alg:adaptive}) has improved generalization error if the training data is well distributed. Moreover, we propose the notion of \emph{optimal generalization} that does not require the sometimes unrealistic assumption that the training data is well distributed among the set of possible inputs. We show that, under some assumptions, Algorithm~\ref{alg:adaptive} has optimal generalization in this sense.
    
    \item We show that \emph{stable} neural networks follow predictable scaling laws in the sense that existence of a large network that approximates the given data well always implies the existence of smaller networks whose approximation quality scales relative to their size difference.

\end{itemize}

\subsection{Other approaches and related work}

The field of \emph{neural architecture search} tries to automatically design neural network architectures that optimally fit a given problem. After the seminal work~\cite{nas_google} the uses reinforcement learning to find good architectures, a wide range of methods was developed employed and we only give a few examples for algorithms that use genetic programming~\cite{nas1,nas2},
evolutionary algorithms~\cite{nas3} and Bayesian optimization~\cite{baysiannas} (see also~\cite{nasoverview} for an overview on the topic).
The architectures considered are much more involved then in the present paper, but a rigorous convergence analysis is often foregone in favor of extensive numerical confirmation. In contrast, the present work is restricted to simple feed forward networks, but proves a convergence guarantee with certain rates. In somewhat similar fashion, the work~\cite{greedy} proofs convergence guarantees for greedy algorithms in dictionary learning. However, the authors have to assume a certain decay of coefficients in the target which is hard to guarantee in practical applications (however, they results in~\cite{greedy} are stronger).

{The work~\cite{adanet} also adaptively designs neural network architectures but with the goal to reduce the generalization error (see also~\cite{deepboost} for a similar approach in ensemble learning methods). Their architecture search algorithm, however, is based on a search in a high dimensional parameter space which must be done with heuristics.}

In terms of designing neural network architectures for mathematical problems such as approximation or solving partial differential equations, there is a large body of work and we mention only a few examples for approximation~\cite{jentzen_approx,schwab_approx,schwab_approx2,schwab_approx3}, for solving partial differential equations~\cite{deep_ritz0,deep_ritz1,weakan}, and for inverse problems~\cite{schwab_inverse,berg_inverse}. Those works either explicitly construct neural network architectures with certain (approximation) qualities or give methods on how to rewrite mathematical problems as optimization problems that can be solved by training neural networks. What is missing is a rigorous treatment of how to find the optimal weights for the given architectures in practical time constraints. The work~\cite{t2p-gap} even show the impossibility of finding good weights in certain situations.

When it comes to quantification of the approximation error of a trained neural network, there are approaches inspired by classical finite element error estimation in~\cite{canuto1,canuto2} and $\Gamma$-convergence in~\cite{babis}.

Finally, there is extensive literature that gives convergence guarantees for (stochastic) gradient descent to global minima (even with linear rate) under different assumptions, for example if the network is highly overparametrized~\cite{op1,op2} or for infinite width networks~\cite{ntk}.


\section{Notation and Definitions}
We consider standard {fully connected} feed-forward networks with one fixed activation. While more intricate network structures would certainly be possible, we aim for a straightforward presentation of the main ideas.

Since we aim to expand the network during the training, it is important to keep track of the network architecture. To that end, we introduce the architecture $\chi = (w_0,\ldots,w_{d+1})$ which stores the depth $d$ (i.e., the number of layers) as well as the widths $w_i$ (i.e., the number of neurons) of the individual layers.
For simplicity, we restrict ourselves to $w_{d+1}=1$, i.e., we only consider scalar networks.

Given an architecture $\chi$, we may consider the set of admissible weights
\begin{align*}
    \W(\chi):=\set{\WW =(\bW_0,\bB_0,\ldots,\bW_{d},\bB_{d})}{\bW_i\in\R^{w_{i+1}\times w_i}\text{ and }\bB_i\in\R^{w_{i+1}}\text{ for }i=0,\ldots,d}.
\end{align*}

We fix the activation function as (Leaky-)ReLU
\begin{align*}
    \phi(x):=\max\{x,\delta_{\rm ReLU} x\}
\end{align*}
for some fixed $0\leq \delta_{\rm ReLU}<1$. While the precise nature of the activation is not important, we will use that $\phi$ is homogeneous of some degree $k\in\N$ ($k=1$ for Leaky-ReLU).

Given the weights $\WW\in\W(\chi)$, we may define the realization of the neural network as $\RR(x,\WW):=\RR_d(x,\WW)$ with 
\begin{align}\label{eq:nndef}
\begin{split}
 \RR_0(x,\WW)&:= \bW_0 x+ \bB_0,\\
 \RR_{i}(x,\WW)&:= \bW_i \phi( \RR_{i-1}(x,\WW)) + \bB_i\quad\text{for all }i=1,\ldots,d.
 \end{split}
\end{align}
For $\WW\in\W(\chi)$, we define the number of trainable parameters as 
\begin{align*}
\#\WW:=\#\chi:= \sum_{i=0}^d (w_i+1)w_{i+1}.
\end{align*}
This notation allows us to identify $\W(\chi)$ with a subset of $\R^{\#\chi}$ by vectorization of all the elements of the $(2d+2)$-tuples $\WW\in\W(\chi)$.

We define the set of realizations for a given architecture by
\begin{align*}
    \RR(\chi):= \set{x\mapsto \RR(x,\WW)}{\WW\in\W(\chi)}
\end{align*}

\subsection{Loss function and training data}
We assume a set of training data $x_1,\ldots,x_n\in\R^{w_0}$ and responses $y_1,\ldots,y_n\in\R$ to be given.
For $F\in\RR(\chi)$ with $F=\RR(\WW)$, we define the vectors
\begin{align*}
\bRR(\WW):=\bF:=\big(F(x_i)\big)_{i=1}^n\in\R^n.
\end{align*} With $\by:=(y_i)_{i=1}^n$, we are interested in minimizing the loss
\begin{align}\label{eq:loss}
    \LL(F):= |\by-\bH\bF|^2,
\end{align}
for a given (semi) norm $|\cdot|$ with corresponding inner product $\cdot$ on $\R^n$ (note that it is essential that the norm is related to an inner product) and a linear operator $H$ that maps the set of all possible realizations into the space of functions $\R^{w_d}\to \R$. Thus, $HF(x_i)$ is understood in the sense $H(x\mapsto F(x))(x_i)$ and correspondingly $(\bH\bF)_i:= (HF)(x_i)$ with $\bH\bF\in\R^n$.

A common example would be the normalized least-squares loss corresponding to the norm 
\begin{align*}
    |\bz|^2 := \frac{1}{n}\sum_{i=1}^n |z_i|^2\quad\text{for all }\bz\in\R^n.
\end{align*}
However, it may also become handy to weight the data points non-uniformly, i.e.,
\begin{align*}
    |\bz|^2 := \sum_{i=1}^n \gamma_i|z_i|^2\quad\text{for all }\bz\in\R^n\quad\text{with }\sum_{i=1}^n \gamma_i=1\text{ and } \gamma_i\geq 0.
\end{align*}
This also includes loss functionals typically used in the context of PINNs, of the form
\begin{align*}
 \int_D |D_x F(x) - y(x)|^2\,dx \approx \LL(F):=\sum_{i=1}^n\gamma_i |(D_xF)_i - y(x_i)|^2,
\end{align*}
where $H=D_x$ is some differential operator on $x$ and $D$ some physical domain. A practical implementation of course has to choose quadrature weights and points $(\gamma_i,x_i)$.

\subsection{Hierarchical algorithm and main result}
The main idea of the work is to start training a small network and extend it as soon as the optimization algorithm reaches (comes close) to a stationary point and the loss does not improve anymore. This leads to a sequence of architectures $\chi_0,\chi_1,\ldots$ and a sequence of weights $\WW_i\in\W(\chi_\ell)$ with the goal of reducing the loss
\begin{align*}
    \LL(\RR(\WW_0))>\LL(\RR(\WW_1))>\ldots
\end{align*}
When we extend the  network from $\chi_\ell$ to $\chi_{\ell+1}$, we aim to keep the already trained weights and  need to initialize the new weights in $\WW_{\ell+1}$. 
\begin{algorithm}\label{alg:adaptive}
 Input: Weights $\WW_0\in\W(\chi_0)$.\\
 For $\ell=0,1,2,\ldots$ do:
 \begin{enumerate}
  \item Employ optimizer on $\WW_\ell$ to minimize $\LL(\WW_\ell)$ until progress is too slow.
  \item Use Algorithm~\ref{alg:inner} with $\WW_0=\WW_\ell$ to extend the network and generate $\WW_{\ell+1}=\WW_{\rm update}$.
 \end{enumerate}
 Output: sequence of weights $(\WW_\ell)_{\ell\in\N}$.
\end{algorithm} 
Obviously, we want to avoid a random initialization in Step~(2) of Algorithm~\ref{alg:adaptive}, since it could destroy all the training progress of the previous steps.

Instead, we want to search for the optimal extension $\WW_{\ell+1}$ such that the loss is minimized. The existence of such an extension can be derived from the assumption that there exists a (theoretical) neural network $G\in\RR(\chi)$ that satisfies
\begin{align}\label{eq:firstimprov}
    \LL(\RR(\WW_\ell)+G)\ll \LL(\RR(\WW_\ell)).
\end{align}
Considering the myriad of neural network approximation results (see, e.g.,~\cite{schwab_approx,schwab_approx2,schwab_approx3,jentzen_approx}), the existence of $G$ seems to be a reasonable assumption.

However, the architecture $\chi$ could be prohibitively large and extending the architecture in that fashion would result in $\#\chi_{\ell+1}\gtrsim \#\chi_\ell + \#\chi$ and thus a prohibitively large search space to find the optimal initialization of the new weights in $\WW_{\ell+1}$.

Instead, we extend the network in small steps by a given architecture $\chi^\star$ (which is assumed to be small), with the following Algorithm~\ref{alg:inner}, which serves as an inner loop of Algorithm~\ref{alg:adaptive}. 
\begin{algorithm}\label{alg:inner}
 Input: Weights $\WW_0\in\W(\chi_0)$. Freeze weights in $\WW_0$.\\
 For $\ell=0,1,2,\ldots,L_{\rm max}$ do:
 \begin{enumerate}
  \item Employ optimizer on non-frozen weights of $\WW_\ell$ to minimize $\LL(\WW_\ell)$ until stationary point is reached.
  \item Compute 
  \begin{align}\label{eq:Wstar}
 \WW^\star := {\rm arg}\max_{\WW\in \W(
 \chi^{\star})\atop |\bH\bRR(\WW)| \leq 1} (\by-\bH\bRR(\WW_\ell))\cdot \bH\bRR(\WW).
\end{align}
  \item Update $\WW_{\ell+1} = \WW_\ell \oplus \alpha\WW^\star$ with optimal $\alpha$ from Lemma~\ref{lem:reduction}, below.
 \end{enumerate}
 Output: $\WW_{\rm update}:=\WW_{L_{\rm max}}$.
\end{algorithm} 
Note that we use $L_{\rm max}$ from Theorem~\ref{thm:inner} below and assume that the \emph{optimizer} in Step~(1) of Algorithms~\ref{alg:adaptive}--\ref{alg:inner} does not return weights which increase the loss compared to before the optimization. This does not mean that the optimizer may not temporarily increase the loss, but the end result should not be worse than the start. This can always be achieved by backtracking, i.e., saving the previous best weights.
\begin{remark}\label{rem:Wstar}
    While the optimization problem~\eqref{eq:Wstar} is still non-convex and thus non-trivial in general, it is posed on a much smaller search space $\W(\chi^\star)$ instead of $\W(\chi)$. If $\#\chi^\star$ is small, an exhaustive search via sampling is feasible.  In some cases,~\eqref{eq:Wstar} can be reduced to a convex constrained optimization problem, for which fast algorithms with convergence guarantees exist. 
 There is extensive literature about rewriting neural network training into convex programs, see e.g.~\cite{convexoptNN,convexopt2}. 
    We note that weights $\WW^\star$ that maximize~\eqref{eq:Wstar} only up to a multiplicative constant are sufficient for the results of this work. Moreover,~\eqref{eq:Wstar} can be optimized in parallel, i.e., by starting many small parallel optimization processes from properly distributed starting points. This is not the case for standard optimization in $\W(\chi)$, where the curse of dimensionality prevents exhaustive search and standard optimization steps have to be done sequentially.
\end{remark}

The main convergence result of Algorithm~\ref{alg:adaptive} can be found in Theorem~\ref{thm:main} below.
Roughly speaking, we show that if at any point of the algorithm there exist networks that can be added to the current state and improve the loss, Algorithm~\ref{alg:adaptive} will extend the current state by similar weights and achieve a comparable loss reduction. In this sense, we show that if the optimal number of parameters $m_\eps$ necessary to reduce the loss below a threshold of $\eps>0$ behaves like $m_\eps\simeq \eps^{-\beta}$, the output of Algorithm~\ref{alg:adaptive} satisfies $\LL(\WW_\ell)\leq \eps$ and $\#\WW_\ell\simeq \eps^{-\beta}$. Note that we do not assume that the optimization algorithm in Step~(1) of Algorithm~\ref{alg:adaptive} finds an actual minimizer of the loss.

\medskip

To be able to prove complexity estimates as in Theorem~\ref{thm:main} below, we require that the maximum value in~\eqref{eq:Wstar} is sufficiently large.
Such a condition will follow from the assumption, that there exist (theoretical) neural networks that satisfy~\eqref{eq:firstimprov}. 
In order to transfer the loss improvement from the theoretical (and possibly very large) network to the small architecture $\chi^\star$, we require a precise statement about the scaling of approximation quality with network size. 

This can be captured in the following assumption:
\begin{assumption}\label{ass:implication}
There exists a constant $L>0$ such that the existence of architectures $\chi,\chi^\star$ (where $\chi^\star$ is assumed to be much smaller than $\chi$), as well as $G\in\RR(\chi)$, $\gamma>0$ with
\begin{subequations}\label{eq:implication}
\begin{align}\label{eq:implication0}
\frac{(\by-\bH\bRR(\WW_\ell))\cdot \bH\bG}{|\by-\bH\bRR(\WW_\ell)||\bH\bG|}\geq \gamma>0
\end{align}
implies that the optimizer of~\eqref{eq:Wstar} satisfies 
\begin{align}\label{eq:implication1}
    \frac{(\by-\bH\bRR(\WW_\ell))\cdot \bH\bRR(\WW^\star)}{|\by-\bH\bRR(\WW_\ell)||\bH\bRR(\WW^\star)|}\geq \frac{\gamma}{L\sqrt{\#\chi/\#\chi^\star}}. 
\end{align}
\end{subequations}
\end{assumption}
\medskip

Note that the validity of~Assumption~\ref{ass:implication} is unclear in general. However, we will see in the Section~\ref{sec:stability} below, that a certain numerical stability of the networks implies~\eqref{eq:implication} and thus enables a rigorous proof of our main result in Theorem~\ref{thm:main}.

\subsection{Neural network calculus}
To utilize the homogeneity of the activation function $\phi$, we define the scaling of weights for $\alpha>0$ via
\begin{align}\label{eq:scaling}
 \alpha\WW :=(\alpha^{\frac{1}{d+1}}\bW_0,\alpha^{\frac{1}{d+1}}\bB_0,\alpha^{\frac{1}{d+1}}\bW_1,\alpha^{\frac{2}{d+1}}\bB_1,\ldots,\alpha^{\frac{1}{d+1}}\bW_{d},\alpha\bB_{d}).
\end{align}
With the definition of the realization~\eqref{eq:nndef} and the homogeneity of the activation function $\phi$, we see that 
\begin{align}\label{eq:Rscaling}
\RR(\alpha\WW) = \alpha\RR(\WW)\quad\text{for all }\alpha>0.
\end{align}
Of course, only scaling the final layer, i.e., $\alpha\bW_d$ and $\alpha\bB_d$, would achieve the same effect on the realization. However, with the particular scaling~\eqref{eq:scaling}, we achieve that $0\WW = 0\in \R^{\#\chi}$ which will show later that we can remove stationary points by extending the network architecture. Given $F\in\RR(\chi)$, we know that there exists at least one set of weight $\WW\in\W(\chi)$ such that $F=\RR(\WW)$. Hence, we have the identities
\begin{align*}
    \alpha F= \RR(\alpha\WW)\quad\text{and}\quad \alpha\bF = \bRR(\alpha\WW)\in \R^n.
\end{align*}
Due to~\eqref{eq:Rscaling}, this definition does not depend on the particular choice of weights $\WW$.

We will also require the addition of two realizations of neural networks: Given two architectures $\chi$ and $\chi'$ with $d=d'$ and $w_0=w_0'$, we define the addition $\WW\oplus \WW'$ of $\WW\in\W(\chi)$ and $\WW'\in\W(\chi')$ by
\begin{align*}
    \bW_0\oplus\bW_0' := \begin{pmatrix}
        \bW_0\\ \bW_0'
    \end{pmatrix}
    \quad\text{and}\quad
    \bB_0\oplus\bB_0':= \begin{pmatrix}
        \bB_0\\ \bB_0'
    \end{pmatrix}
\end{align*}
as well as for $i=1,\ldots,d-1$
\begin{align*}
    \bW_i\oplus\bW_i' := \begin{pmatrix}
        \bW_i & 0 \\ 0 &\bW_i'
    \end{pmatrix}
    \quad\text{and}\quad
    \bB_i\oplus\bB_i':= \begin{pmatrix}
        \bB_i\\ \bB_i'
    \end{pmatrix}
\end{align*}
and 
\begin{align*}
    \bW_d\oplus\bW_d' := \begin{pmatrix}
        \bW_d & \bW_d'
    \end{pmatrix}
    \quad\text{and}\quad
    \bB_d\oplus\bB_d':= \bB_d+ \bB_d'.
\end{align*}
This achieves $\RR(\WW\oplus\WW') = \RR(\WW)+\RR(\WW')$.

In the following, we collect some elementary observations.
\begin{lemma}\label{lem:chainrule}
 Any two network realizations  $F,G\in\RR(\chi)$ satisfy for $\alpha\geq 0$
 \begin{align*}
  \LL(&F+ {\alpha G}) - \LL(F) = -2\alpha(\by-\bH\bF)\cdot \bH\bG+ \alpha^2|\bH\bG|^2.
 \end{align*}

\end{lemma}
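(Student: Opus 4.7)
The proof is essentially a direct computation using the definition of the loss in~\eqref{eq:loss} together with the linearity of the map $H$, the linearity of the evaluation vectorization $F \mapsto \bF$, and the bilinearity of the inner product associated to the seminorm $|\cdot|$.

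My plan is as follows. First, I would observe that evaluation at the training points is linear, so that $(F + \alpha G)(x_i) = F(x_i) + \alpha G(x_i)$, which means the vectorization satisfies $\bF + \alpha\bG$ componentwise. Combining this with the linearity of the operator $H$ (and hence of its matrix representation $\bH$), I obtain $\bH(\bF + \alpha\bG) = \bH\bF + \alpha \bH\bG$.

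Next, I would plug this into the definition $\LL(F + \alpha G) = |\by - \bH(\bF + \alpha\bG)|^2 = |(\by - \bH\bF) - \alpha \bH\bG|^2$ and expand using the assumption that $|\cdot|$ comes from an inner product $\cdot$:
\begin{align*}
 \LL(F+\alpha G) &= (\by-\bH\bF)\cdot(\by-\bH\bF) - 2\alpha (\by-\bH\bF)\cdot \bH\bG + \alpha^2 \bH\bG\cdot\bH\bG \\
 &= \LL(F) - 2\alpha(\by-\bH\bF)\cdot \bH\bG + \alpha^2 |\bH\bG|^2.
\end{align*}
Subtracting $\LL(F)$ from both sides yields the claimed identity.

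There is no real obstacle here: the only thing that requires any thought is verifying the linearity step, namely that $F \mapsto \bH\bF$ is linear on the linear combination $F + \alpha G$. This uses only that $H$ is a linear operator on functions and that pointwise evaluation commutes with linear combinations; no assumption on $F, G$ being realizations of the \emph{same} architecture is actually needed beyond ensuring both sides are well-defined. Note that the statement does not require $F + \alpha G \in \RR(\chi)$ (indeed, this is typically false), only that $F$ and $G$ are functions for which $\bH \bF$ and $\bH\bG$ make sense; this is implicit in the hypothesis $F,G\in\RR(\chi)$.
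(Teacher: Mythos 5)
Your proof is correct and follows essentially the same route as the paper's: plug $\bH(\bF+\alpha\bG)=\bH\bF+\alpha\bH\bG$ into $\LL(F+\alpha G)=|\by-\bH\bF-\alpha\bH\bG|^2$ and expand the square using the inner product. The extra remarks on linearity and on $F,G$ not needing to share an architecture are accurate but not something the paper spells out.
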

\begin{proof}
 By definition, we have
 \begin{align*}
     \LL(F+\alpha G) &= |\by-\bH\bF-\alpha\bH\bG|^2 = \LL(F) + |\alpha \bH\bG|^2 - 2(\by-\bH\bF)\cdot \alpha \bH\bG\\
     &=
       \LL(F) + \alpha^2|\bH\bG|^2 - 2\alpha(\by-\bH\bF)\cdot \bH\bG.   
 \end{align*}
\end{proof}

Below, we will hierarchically extend networks by adding new weights to the existing network. {One key observation is that the homogeneity implies that any network that can be added to improve the loss, has an output vector that points roughly in the same direction as the loss vector. }
\begin{lemma}\label{lem:descentexists}
Let $\chi,\chi'$ denote architectures, let $F\in\RR(\chi)$, and assume that there exists $G\in\RR(\chi')$ with $\LL(F+G)<\LL(F)$. Then, there holds
 \begin{align}
 \label{eq:descentexists_1}
  \LL(F+ {\alpha G}) \leq \LL(F) -\alpha |\LL(F+G)-\LL(F)|\quad\text{for all }0\leq \alpha\leq 1.
 \end{align}
 Moreover, there holds
 \begin{align*}
 2(\by-\bH\bF)\cdot \frac{\bH\bG}{|\bH\bG|}\geq \frac{|\LL(F+G)-\LL(F)|}{2\sqrt{\LL(F)}}.
 \end{align*}
\end{lemma}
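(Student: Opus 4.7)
Both claims follow from the quadratic identity in Lemma~\ref{lem:chainrule}. The plan is to expand $\LL(F+\alpha G)$, specialize at $\alpha=1$ to solve for the inner product $(\by-\bH\bF)\cdot\bH\bG$ in terms of the loss gap, and then either substitute back (for the first claim) or bound the quadratic by its minimum (for the second).

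For~\eqref{eq:descentexists_1}, I would apply Lemma~\ref{lem:chainrule} and read off at $\alpha=1$ the identity $2(\by-\bH\bF)\cdot\bH\bG = |\LL(F+G)-\LL(F)| + |\bH\bG|^2$, where the absolute value is legitimate because the hypothesis $\LL(F+G)<\LL(F)$ fixes the sign of $\LL(F+G)-\LL(F)$. Substituting this back into the expansion for general $\alpha$ rewrites
\begin{align*}
\LL(F+\alpha G) - \LL(F) = -\alpha|\LL(F+G)-\LL(F)| + \alpha(\alpha-1)|\bH\bG|^2,
\end{align*}
and since $\alpha(\alpha-1) \leq 0$ for $\alpha \in [0,1]$, the second term is non-positive, yielding~\eqref{eq:descentexists_1}.

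For the second claim, I would use that $\alpha \mapsto \LL(F+\alpha G) - \LL(F)$ is a scalar quadratic in $\alpha$ whose global minimum equals $-\bigl((\by-\bH\bF)\cdot\bH\bG\bigr)^2/|\bH\bG|^2$. Comparing this minimum with the value at $\alpha = 1$, which is $-|\LL(F+G)-\LL(F)|$ by hypothesis, yields
\begin{align*}
\left((\by-\bH\bF)\cdot\tfrac{\bH\bG}{|\bH\bG|}\right)^2 \geq |\LL(F+G)-\LL(F)|.
\end{align*}
The identity from the first step shows $(\by-\bH\bF)\cdot\bH\bG > 0$, so taking positive square roots is justified and gives $(\by-\bH\bF)\cdot\bH\bG/|\bH\bG| \geq \sqrt{|\LL(F+G)-\LL(F)|}$. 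The conversion to the stated form follows from $|\LL(F+G)-\LL(F)| = \LL(F) - \LL(F+G) \leq \LL(F)$ (using $\LL(F+G)\geq 0$), which implies $\sqrt{|\LL(F+G)-\LL(F)|} \geq |\LL(F+G)-\LL(F)|/\sqrt{\LL(F)}$; multiplying by $2$ leaves considerable slack against the $2\sqrt{\LL(F)}$ in the denominator of the target.

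The proof is essentially algebraic, and I do not expect real obstacles; the main delicate points are keeping signs and absolute values aligned, and noting that the quadratic-minimum argument does not require $\alpha^\star G$ to lie in $\RR(\chi)$ since the minimization is used only as a scalar inequality in $\alpha \in \R$, not as a statement about realizable networks.
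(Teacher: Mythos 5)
Your proof is correct and takes a genuinely different algebraic route from the paper's, although both build entirely on the quadratic identity of Lemma~\ref{lem:chainrule}. For~\eqref{eq:descentexists_1}, the paper proves the inequality directly from the convexity of $|\cdot|^2$: it writes $\by-\bH\bF-\alpha\bH\bG$ as a convex combination and applies the triangle/Jensen inequality, whereas you solve for $2(\by-\bH\bF)\cdot\bH\bG$ at $\alpha=1$, substitute back, and observe that the remainder $\alpha(\alpha-1)|\bH\bG|^2$ is non-positive on $[0,1]$ --- this recovers the same inequality and also exhibits the slack exactly. For the second claim, the paper divides the inequality by $\alpha$ and passes to the limit $\alpha\to 0$ (in effect a one-sided derivative argument) together with the bound $|\bH\bG|\le 2\sqrt{\LL(F)}$, while you instead compare the value of the scalar quadratic at $\alpha=1$ with its global minimum $-\bigl((\by-\bH\bF)\cdot\bH\bG\bigr)^2/|\bH\bG|^2$ and then convert $\sqrt{|\LL(F+G)-\LL(F)|}$ into the stated form via $|\LL(F+G)-\LL(F)|\le\LL(F)$. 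Your route yields the intermediate estimate $(\by-\bH\bF)\cdot\bH\bG/|\bH\bG|\ge\sqrt{|\LL(F+G)-\LL(F)|}$, which is sharper than what is actually needed (by a factor of $4$ against the paper's constant), and it avoids the limit altogether. Your remark that the minimizer $\alpha^\star$ need not correspond to a realizable network is also apt: the argument only treats $\alpha\mapsto|\by-\bH\bF-\alpha\bH\bG|^2$ as a scalar polynomial. Both approaches are sound; yours is a bit more computational but more transparent about the constants, while the paper's convexity step is slicker for the first part.
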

\begin{proof}
 Convexity of the loss functional implies
 \begin{align*}
\LL(F+ \alpha G) &=\big|(1-\alpha)(\by-\bH\bF) + \alpha\big(\by-\bH\bF-\bH\bG\big)\big|^2\\
&\leq (1-\alpha)\LL(F) + \alpha\LL(F+G).
\end{align*}
This concludes proof of the first statement.

For the second statement, we first observe that
\begin{align*}
 |\bH\bG|\leq \sqrt{\LL(F)}+\sqrt{\LL(F+G)}\leq 2\sqrt{\LL(F)}.
\end{align*}
Using \eqref{eq:descentexists_1} and Lemma~\ref{lem:chainrule} we can show:
\begin{align*}
 -\alpha |\LL(F+G)-\LL(F)|&\geq \LL(F+ \alpha G) - \LL(F) \\
 &= -2\alpha(\by-\bH\bF)\cdot \bH\bG+ \alpha^2|\bH\bG|^2.
\end{align*}
Thus, passing to the limit $\alpha\to0$, we conclude the proof of the second statement. 
 \end{proof}

\begin{figure}
\begin{center}
\tdplotsetmaincoords{70}{60} 

\begin{tikzpicture}[tdplot_main_coords,scale=1.4]
    \coordinate (O) at (0,0,0);
    \coordinate (I) at (0,0,0);
    \filldraw (I) circle (2pt); 

    \draw[->,red!50] (O) -- (2,0,0) node[anchor=north east]{};
    \draw[->,red!50] (O) -- (0,2,0) node[anchor=north west]{};
    \draw[->,red!50] (O) -- (0,0,2) node[anchor=south]{};

    \foreach \x in {-0.5,0.5}
        \foreach \y in {-0.5,0.5}
            \foreach \z in {-0.5,0.5}{
                \draw[->] (I) -- (\x,\y,\z); 
            }

\end{tikzpicture}
\qquad\qquad
\begin{tikzpicture}[tdplot_main_coords,scale=1.4]
    \coordinate (O) at (0,0,0);
    \coordinate (I) at (0,0,0);
    \filldraw (I) circle (2pt); 

    \draw[->,red!50] (O) -- (2,0,0) node[anchor=north east]{};
    \draw[->,red!50] (O) -- (0,2,0) node[anchor=north west]{};
    \draw[->,red!50] (O) -- (0,0,2) node[anchor=south]{};

    \foreach \x in {0.2,1}
        \foreach \y in {0.3,1}
            \foreach \z in {0.1,1}{
                \draw[->] (I) -- (\x,\y,\z); 
            }

\end{tikzpicture}
\end{center}
\caption{Schematic of the stability assumption in~\eqref{eq:stability0} and Definition~\ref{def:Lstable}. The black vectors represent the $\bz_i$. While the configuration on the right-hand side is okay, the left-hand side configuration violates~\eqref{eq:stability0} due to cancellation.}
\label{fig:stability}
\end{figure}

\section{Quasi-optimal hierarchical training}
As discussed above, we require a certain stability in the considered networks in order to prove optimal complexity of Algorithm~\ref{alg:adaptive}.

\subsection{Numerical stability of neural networks}\label{sec:stability}
We start with a geometric observation that implies the stability of sums of vectors. As illustrated in Figure~\ref{fig:stability}, if the vectors $\bz_i$ in Lemma~\ref{lem:independence} below {point into similar directions,} at least one of the vectors must be roughly parallel to their sum. The fact is quantified by the constant $L$ in~\eqref{eq:stability0}.
\begin{lemma}\label{lem:independence}
 Let $\bz_1,\ldots,\bz_w\in\R^n$ such that
\begin{align}\label{eq:stability0}
L^2|\sum_{j=1}^w \bz_j|^2\geq \sum_{j=1}^{w}|\bz_j|^2.
\end{align}
Then, there holds for all $\by\in\R^n$ that
 \begin{align}\label{eq:stability}
   \frac{|\by \cdot \sum_{j=1}^w \bz_j|}{|\sum_{j=1}^w \bz_j|}\leq 
    Lw^{1/2}\max_{j=1,\ldots,w}\frac{|\by \cdot  \bz_j|}{|\bz_j|}.
\end{align}
If $\by\geq 0$ entrywise, there holds even
\begin{align}\label{eq:stabilitypos}
     \frac{|\by \cdot \phi(\sum_{j=1}^w \bz_j)|}{|\sum_{j=1}^w \bz_j|}\leq 
    Lw^{1/2}\max_{j=1,\ldots,w}\frac{|\by \cdot  \phi(\bz_j)|}{|\bz_j|}.
\end{align}
\end{lemma}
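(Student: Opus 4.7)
The plan is to prove both inequalities from a common three-step template: triangle inequality, extraction of the maximum over $j$, and a Cauchy--Schwarz step that uses the hypothesis \eqref{eq:stability0} to convert $\sqrt{\sum_j|\bz_j|^2}$ into $|\sum_j\bz_j|$.

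For \eqref{eq:stability}, linearity of the inner product and the ordinary triangle inequality yield
\begin{align*}
 \Bigl|\by\cdot\sum_{j=1}^w \bz_j\Bigr|
  \;\le\;\sum_{j=1}^w\frac{|\by\cdot\bz_j|}{|\bz_j|}\,|\bz_j|
  \;\le\;\Bigl(\max_{j}\tfrac{|\by\cdot\bz_j|}{|\bz_j|}\Bigr)\sum_{j=1}^w|\bz_j|.
\end{align*}
Cauchy--Schwarz in $\R^w$ then gives $\sum_j|\bz_j|\le\sqrt{w}\,\sqrt{\sum_j|\bz_j|^2}$, and \eqref{eq:stability0} gives $\sqrt{\sum_j|\bz_j|^2}\le L\,|\sum_j\bz_j|$. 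Dividing by $|\sum_j\bz_j|$ concludes.

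For \eqref{eq:stabilitypos} the additional ingredient is that $x\mapsto|\phi(x)|=\max(x,-\delta_{\rm ReLU}x)$ is the pointwise maximum of two linear functions, hence convex and positively $1$-homogeneous, and therefore subadditive. Applied componentwise this yields $|\phi(\sum_j\bz_j)|\le\sum_j|\phi(\bz_j)|$ entrywise, and combined with $\by\ge 0$ it gives
\begin{align*}
 \Bigl|\by\cdot\phi\Bigl(\sum_j\bz_j\Bigr)\Bigr|
  \;\le\;\by\cdot\Bigl|\phi\Bigl(\sum_j\bz_j\Bigr)\Bigr|
  \;\le\;\sum_j\by\cdot|\phi(\bz_j)|.
\end{align*}
In the pure ReLU case $\delta_{\rm ReLU}=0$ one has $|\phi(\bz_j)|=\phi(\bz_j)\ge 0$, so $\by\cdot|\phi(\bz_j)|=|\by\cdot\phi(\bz_j)|$, and the same max/Cauchy--Schwarz/hypothesis chain as in the first part finishes the argument.

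The main obstacle is the Leaky-ReLU case $\delta_{\rm ReLU}>0$: entries of $\phi(\bz_j)$ may have mixed signs, so that $\by\cdot|\phi(\bz_j)|$ can strictly exceed $|\by\cdot\phi(\bz_j)|$ through cancellation inside the inner product, and these two quantities cannot be swapped at the end without cost. I would address this by decomposing $\phi(x)=(1-\delta_{\rm ReLU})\,x^{+}+\delta_{\rm ReLU}\,x$ and estimating the convex summand by the subadditivity argument above while estimating the linear summand by the already-proven first inequality \eqref{eq:stability}; recombining and using $0\le\delta_{\rm ReLU}<1$ should yield the bound up to a $\delta_{\rm ReLU}$-dependent factor that can be absorbed into the constant $L$.
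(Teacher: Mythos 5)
Your proof of the first inequality \eqref{eq:stability} is correct and is essentially the paper's own argument, merely organized in a cleaner order: you pull out the maximum \emph{before} applying Cauchy--Schwarz to convert $\sum_j|\bz_j|$ into $\sqrt{w}\,\sqrt{\sum_j|\bz_j|^2}$ and then invoke \eqref{eq:stability0}, whereas the paper keeps the quotients $\tfrac{|\by\cdot\bz_j|}{|\bz_j|}$ inside a Cauchy--Schwarz step and extracts the maximum afterwards. Both routes use exactly the same three ingredients (triangle inequality, Cauchy--Schwarz in $\R^w$, the stability hypothesis) and yield the same constant $L\sqrt{w}$.

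Your diagnosis of the obstacle in the second inequality \eqref{eq:stabilitypos} is correct and, in fact, more careful than the paper's own one-line proof (``use $\phi(a+b)\le\phi(a)+\phi(b)$ and follow the arguments above''), which tacitly uses $\by\cdot\phi(\sum_j\bz_j)\ge 0$ — true only when $\delta_{\rm ReLU}=0$. For $\delta_{\rm ReLU}>0$ the claimed inequality is in fact \emph{false} as stated: take $n=2$, $w=2$, $\by=(1,\,1/\delta_{\rm ReLU})\ge 0$, $\bz_1=(1,-1)$, $\bz_2=(-a,\,\delta_{\rm ReLU}^2 a)$ with $0<a<1$ small. Then $\by\cdot\phi(\bz_1)=1-1=0$ and $\by\cdot\phi(\bz_2)=-\delta_{\rm ReLU}a+\delta_{\rm ReLU}a=0$, so the right-hand side of \eqref{eq:stabilitypos} vanishes, while $\by\cdot\phi(\bz_1+\bz_2)=-a(1-\delta_{\rm ReLU}^2)\ne 0$ makes the left-hand side strictly positive; the stability hypothesis \eqref{eq:stability0} holds with, say, $L=2$ for $a$ small. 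Your proposed repair — splitting $\phi=(1-\delta_{\rm ReLU})\phi_0+\delta_{\rm ReLU}\,\mathrm{id}$ — cannot rescue the inequality in the stated form for the same reason: it bounds the left-hand side by quantities involving $\by\cdot\phi_0(\bz_j)$ and $|\by\cdot\bz_j|$, which need not be controlled by $|\by\cdot\phi(\bz_j)|$ (in the counterexample $\by\cdot\phi_0(\bz_1)=1$ while $\by\cdot\phi(\bz_1)=0$). The honest conclusion is that \eqref{eq:stabilitypos} holds as written only for the pure ReLU $\delta_{\rm ReLU}=0$, where $\phi\ge 0$ makes $\by\cdot\phi(\cdot)\ge 0$ and the absolute values are free; for leaky ReLU the right-hand side must be restated, e.g.\ in terms of $\max_j\by\cdot\phi_0(|\bz_j|)/|\bz_j|$ or by replacing $|\by\cdot\phi(\bz_j)|$ with $\by\cdot\phi_0(\bz_j)+\delta_{\rm ReLU}\,\by\cdot\phi_0(-\bz_j)$, which is what your decomposition actually controls.
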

\begin{proof}
From~\eqref{eq:stability0}, we get directly
\begin{align*}
    \sum_{j=1}^{w}|\bz_j|^2 +\sum_{i\neq j}\bz_i\cdot\bz_j \geq \frac1{L^2} \sum_{j=1}^{w}|\bz_j|^2
\end{align*}
and hence
\begin{align}\label{eq:Lbound}
    1+\frac{\sum_{i\neq j}\bz_i\cdot\bz_j }{\sum_{j=1}^{w}|\bz_j|^2}\geq \frac{1}{L^2}.
\end{align}
On the other hand, by triangle inequality in the numerator, there holds
\begin{align*}
    \frac{|\by \cdot \sum_{j=1}^w \bz_j|}{|\sum_{j=1}^w \bz_j|}&\leq \frac{\sum_{j=1}^w |\by\cdot \bz_j|}{\sqrt{\sum_{j=1}^{w}|\bz_j|^2 +\sum_{i\neq j}\bz_i\cdot\bz_j }}=
    \frac{\sum_{j=1}^w |\bz_j|\frac{|\by\cdot \bz_j|}{|\bz_j|}}{\sqrt{\sum_{j=1}^{w}|\bz_j|^2 +\sum_{i\neq j}\bz_i\cdot\bz_j }}\\
    &\leq
    \frac{\sqrt{\sum_{j=1}^w |\bz_j|^2}\sqrt{\sum_{j=1}^w\frac{|\by\cdot \bz_j|^2}{|\bz_j|^2}}}{\sqrt{\sum_{j=1}^{w}|\bz_j|^2 +\sum_{i\neq j}\bz_i\cdot\bz_j }}
    = 
    \frac{\sqrt{\sum_{j=1}^w\frac{|\by\cdot \bz_j|^2}{|\bz_j|^2}}}{\sqrt{1+\frac{\sum_{i\neq j}\bz_i\cdot\bz_j }{\sum_{j=1}^{w}|\bz_j|^2}}}
    \leq 
    \frac{w^{1/2}\max_{j=1,\ldots,w}\frac{|\by\cdot \bz_j|}{|\bz_j|}}{\sqrt{1+\frac{\sum_{i\neq j}\bz_i\cdot\bz_j }{\sum_{j=1}^{w}|\bz_j|^2}}}.
\end{align*}
This, together with~\eqref{eq:Lbound}, concludes the proof of the first statement. For the second statement, we use
\begin{align*}
    \phi(a+b)\leq \phi(a) + \phi(b) 
\end{align*}
and follow the arguments above.
\end{proof}
\begin{remark}
    Note that~\eqref{eq:stability0} is always satisfied for some $L$ if $\sum_{j=1}^w \bz_j\neq 0$. In the following, it will be important that $L$ remains reasonably bounded.
\end{remark}
With this geometric stability, we can formulate a notion of stability for neural networks. Heuristically, the definition implies that small changes in the weights of the final layer do not drastically change the output of the network. Such an assumption has to be expected for any numerical approximation algorithm to perform well.
\begin{definition}\label{def:Lstable}
Given an architecture $\chi=(w_0,\ldots,w_{d+1})$,
{ and a tuple of an architecture $\chi^\star$ and parameters $w^\star \in \N$ and $L>0$,
}
a network $\RR(\WW)\in \RR(\chi)$ is called $(L,\chi^\star,w^\star)$-stable 
if $w^\star \#\chi^\star \leq L\#\chi$ and it can be written as
\begin{align}\label{eq:tildeF}
 \RR(x,\WW) = \sum_{j=1}^{w^\star} \RR(x,\WW^{(j)})
\end{align}
for some {weights} $\WW^{(j)}\in\WW(\chi^\star)$, such that the vectors $\bz_j:=\bH\bRR(\WW^{(j)})\in\R^n$ for $j=1,\ldots,w^\star$ satisfy~\eqref{eq:stability0}  for some $L>0$.
\end{definition}
\begin{remark}\label{rem:stability}
    Note that Definition~\ref{def:Lstable} is indeed necessary for numerical stability of a network,
    in the sense that small $\eps$-errors in the realizations of the components $\RR(x,\WW^{(j)})$ amount to a small relative evaluation error. 
    To see that, assume that~\eqref{eq:stability0} is violated for $\bz_j:=(\RR(x_i,\WW^{(j)}))_{i=1}^n$. With $\bZ:=(\bz_1,\ldots,\bz_w)\in\R^{n\times w}$ and $\bE\in\R^{n\times w}$ being an entrywise positive perturbation (e.g., round-off error) with relative error $\eps$, i.e., $\norm{\bE}{F}=\eps\norm{\bZ}{F}$, there holds 
    \begin{align*}
                \frac{|\bZ\b1-(\bZ+\bE)\b1|}{|\bZ \boldsymbol{1}|}\geq \frac{\norm{\bE}{F}}{|\bZ\boldsymbol{1}|}=\frac{\norm{\bZ}{F}\,\eps}{|\bZ\boldsymbol{1}|}\geq L\eps,
    \end{align*}
    where $\b1:=(1,\ldots,1)\in\R^w$. Since $\bRR(\WW)=\bZ\b1$, large stability constants $L$ lead to large relative evaluation errors of $x\mapsto \RR(x,\WW)$
    under small perturbations.
\end{remark}

Assuming $H={\rm id}$, one straightforward way to obtain a partition as in~\eqref{eq:tildeF} is to split up the final layer of any given network, i.e., for weights $\WW\in \W(\chi)$, define
\begin{align}\label{eq:starweights}
    \WW^{(j)} := (\bW_0,\bB_0,\ldots,(\bW_{d-1})_{j,:},(\bB_{d-1})_j,(\bW_d)_{1,j},\bB_d/w_d)
\end{align}
with $\WW^{(j)}\in\W(\chi^\star)$ 
for 
\begin{align}\label{eq:chistar}
\chi^{\star}:=(w_0,\ldots,w_{d-1},w_d',1).
\end{align}
for some $w_d'\ll w_d$. Then, $w^\star = w_d/w_d'$. Of course this does not necessarily imply $L$-stability. However, by choosing $w_d'=3$, the final layer of the networks with weights in $\W(\chi^\star)$ are able to express hat functions. For example, given $a<b<c$, and assuming $\delta_{\rm ReLU}=0$ (the ReLU case), the function
\begin{align*}
    F_{a,b,c}(x):=  \begin{pmatrix}
        (b-a)^{-1}, & -  (b-a)^{-1}- (c-b)^{-1},&  (c-b)^{-1}
    \end{pmatrix}\phi\Big(\begin{pmatrix}
        1\\
        1\\
        1
    \end{pmatrix}x + 
    \begin{pmatrix}
    -a\\
    -b\\
    -c
    \end{pmatrix}\Big)
\end{align*}
is a piecewise linear hat function that is zero outside of $(a,c)$ and peaks at $b$ with $F_{a,b,c}(b)=1$. Since the realizations $\RR(x,\WW^{(j)})$ are piecewise affine functions of $x$, we can further decompose them into sums of networks with hat functions in the final layer. Doing this, we end up with vectors $\bz_j$ in Definition~\ref{def:Lstable} which have bounded overlap, i.e., for any index $i=1,\ldots, n$, only a small number of vectors $\bz_j$ have non-zero entries. This then implies stability in the sense of~\eqref{eq:stability0}.

Of course the final number of parts $w^\star$ necessary to obtain a good stability estimate can not be bounded in general and will depend on the particular situation. But, in principle, the discussion above shows that $L$-stable networks exist for moderate constants $L$.

\subsection{Scaling laws for (deep) stable networks}

Finding a network $G$ that maximizes~\eqref{eq:implication0} is akin to approximating the error $\by-\bH\bRR(\WW_\ell)$.
The question whether the implication~\eqref{eq:implication} is true is essentially a question about
the minimally necessary complexity of the network architecture $\chi$ to get any meaningful approximation by the realization of a neural network $G\in\RR(\chi)$.

The following two results show that there is no such complexity barrier. Lemma~\ref{lem:algoptdir} shows that wide networks that approximate well imply the existence of a narrow network with an approximation quality that is proportional to its size. Lemma~\ref{lem:deepalgoptdir} shows a similar result for deep networks, i.e., the existence of a deep network that approximates well implies the existence of shallow networks that approximate with an exponentially (with respect to the depth) reduced approximation quality. This reduction has to be expected as there are countless examples of deep neural networks showing exponential improvement in approximation quality with respect to the depth (see, e.g.,~\cite{schwab_approx}).

Moreover, the results below can be seen as a counterpart to scaling laws often observed in practical applications of machine learning, see, e.g.,~\cite{scaling,scaling1,scaling2}.

\begin{lemma}\label{lem:algoptdir}
Let $G\in\RR(\chi)$ such that $G$ is $(L,\chi^\star,w^\star)$-stable according to  Definition~\ref{def:Lstable}.
Then, Assumption~\ref{ass:implication} is true.

\end{lemma}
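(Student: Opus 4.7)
The plan is to exploit the stability decomposition of $G$ together with Lemma~\ref{lem:independence}: writing $G = \sum_{j=1}^{w^\star}\RR(\WW^{(j)})$ with $\WW^{(j)}\in\W(\chi^\star)$ and $\bz_j := \bH\bRR(\WW^{(j)})$ satisfying~\eqref{eq:stability0}, Lemma~\ref{lem:independence} applied to the residual vector $\by-\bH\bRR(\WW_\ell)$ in place of $\by$ yields
\begin{align*}
\frac{|(\by-\bH\bRR(\WW_\ell))\cdot \bH\bG|}{|\bH\bG|}\leq L\sqrt{w^\star}\max_{j=1,\ldots,w^\star}\frac{|(\by-\bH\bRR(\WW_\ell))\cdot \bH\bRR(\WW^{(j)})|}{|\bH\bRR(\WW^{(j)})|}.
\end{align*}
The assumption~\eqref{eq:implication0} bounds the left-hand side from below by $\gamma\,|\by-\bH\bRR(\WW_\ell)|$, so there exists an index $j^\star$ with
\begin{align*}
\frac{|(\by-\bH\bRR(\WW_\ell))\cdot \bH\bRR(\WW^{(j^\star)})|}{|\bH\bRR(\WW^{(j^\star)})|}\geq \frac{\gamma\,|\by-\bH\bRR(\WW_\ell)|}{L\sqrt{w^\star}}.
\end{align*}

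The second step is to show that the optimizer $\WW^\star$ of~\eqref{eq:Wstar} does at least as well as this single candidate $\WW^{(j^\star)}$. Using the homogeneity-based scaling~\eqref{eq:scaling}--\eqref{eq:Rscaling}, I can rescale $\WW^{(j^\star)}$ to satisfy $|\bH\bRR(\cdot)|=1$, and then (if necessary) flip the sign of the final linear layer $(\bW_d,\bB_d)$ — which simply negates the realization — so that the inner product against $\by-\bH\bRR(\WW_\ell)$ is nonnegative. Call this normalized candidate $\widetilde\WW\in\W(\chi^\star)$; it is admissible in~\eqref{eq:Wstar} and hence
\begin{align*}
(\by-\bH\bRR(\WW_\ell))\cdot \bH\bRR(\WW^\star)\geq (\by-\bH\bRR(\WW_\ell))\cdot \bH\bRR(\widetilde\WW) = \frac{|(\by-\bH\bRR(\WW_\ell))\cdot \bH\bRR(\WW^{(j^\star)})|}{|\bH\bRR(\WW^{(j^\star)})|}.
\end{align*}
Since the constraint in~\eqref{eq:Wstar} gives $|\bH\bRR(\WW^\star)|\leq 1$, dividing by $|\bH\bRR(\WW^\star)|$ only improves the bound, yielding
\begin{align*}
\frac{(\by-\bH\bRR(\WW_\ell))\cdot \bH\bRR(\WW^\star)}{|\by-\bH\bRR(\WW_\ell)||\bH\bRR(\WW^\star)|}\geq \frac{\gamma}{L\sqrt{w^\star}}.
\end{align*}

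Finally, the $(L,\chi^\star,w^\star)$-stability gives $w^\star\leq L\,\#\chi/\#\chi^\star$, so $\sqrt{w^\star}\leq \sqrt{L}\sqrt{\#\chi/\#\chi^\star}$, and the previous bound becomes $\gamma/(L^{3/2}\sqrt{\#\chi/\#\chi^\star})$. Since Assumption~\ref{ass:implication} only requires existence of some constant, we rename $L^{3/2}$ as the new $L$ and conclude. The main conceptual point — and the only nontrivial ingredient — is the passage from the \emph{average} direction $\bH\bG$ to one of the \emph{pieces} $\bH\bRR(\WW^{(j^\star)})$, for which the stability condition~\eqref{eq:stability0} is exactly what prevents destructive cancellation between the summands and therefore enables Lemma~\ref{lem:independence}. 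The remaining ingredients (sign flipping of the last layer, homogeneous rescaling, saturation of the unit-norm constraint) are routine consequences of the neural network calculus set up earlier.
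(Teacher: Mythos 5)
Your proof is correct and follows essentially the same route as the paper: decompose $G$ into the $w^\star$ stable pieces, apply Lemma~\ref{lem:independence} to the residual to isolate a single piece $\WW^{(j_0)}$ with a good angle to the residual, and then observe that this piece (after normalization) is feasible for~\eqref{eq:Wstar} so the maximizer does at least as well. You fill in some details the paper leaves implicit (sign flip of the last layer to remove the absolute value, homogeneous rescaling to the unit ball, and the conversion $w^\star\leq L\#\chi/\#\chi^\star$ which shows the resulting constant in Assumption~\ref{ass:implication} is $L^{3/2}$ rather than $L$ itself), but the substance is the same.
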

\begin{proof}
We assume~\eqref{eq:implication0}. According to~\eqref{eq:tildeF}, we may rewrite $G$ as
\begin{align*}
 G(x)= \sum_{j=1}^{w^\star}\RR(x,\WW^{(j)})
\end{align*}
and apply Lemma~\ref{lem:independence} to the vectors $\bz_j:=\bH\bRR(\WW^{(j)})$ to obtain $j_0\in\{1,\ldots,w^\star\}$ with
\begin{align}\label{eq:firstest}
  2\frac{\by-\bH\bF}{|\by-\bH\bF|} \cdot \frac{\bH\bRR(\WW^{(j_0)})}{|\bH\bRR(\WW^{(j_0)})|}\geq \frac{\gamma}{L\sqrt{w^\star}}.
\end{align}
Since $\WW^{(j_0)}\in \WW(\chi^\star)$ and due to the normalization, we know that the weights $\WW^\star$ from~\eqref{eq:Wstar} satisfy~\eqref{eq:implication1}.
This concludes the proof.
\end{proof}

For deep networks, the result takes the following form. Note that the positivity assumption below can always be enforced by changing the bias of the final layer of $\RR(\WW_\ell)$ such that $\by-\bH\bRR(\WW_\ell)$ is entrywise positive.
\begin{lemma}\label{lem:deepalgoptdir}
   Let $\chi=(d,w_0,\ldots,w_{d+1})$ and $G=\RR(\bW_0,\bB_0,\ldots,\bW_d,\bB_d)\in\RR(\chi)$. Assume that every layer of $G$ is $(\widetilde L,w_i^\star,\chi_i^\star)$-stable in the sense that for all $2\leq i\leq d$ and $1\leq j\leq w_{i+1}$ 
\begin{align}\label{eq:deepstab}
\RR(\bW_0,\bB_0,\ldots,\bW_i,\bB_i)_j= (\bB_i)_j+\sum_{k=1}^{w_i}(\bW_i)_{jk}\phi(\RR(\bW_0,\bB_0,\ldots,\bW_{i-1},\bB_{i-1}))_k
\end{align}
satisfies Definition~\ref{def:Lstable} with $w_i^\star=w_i+1$ and architectures $\chi_i^\star=(w_0,\ldots,w_{i-1},1,1)$ (note that the constant function $x\mapsto (\bB_i)_j$ is also contained in $\RR(\chi_i^\star)$).
   Assume that $\by-\bH\bRR(\WW_\ell)\in\R^n$ is entrywise non-negative. Then, assumption~\eqref{eq:implication} is true
   with $\chi^\star=(w_0,1,1)$ and 
   \begin{align*}
   L:=\sqrt{\#\chi^\star/\#\chi} C_{\rm stab}C_\WW^{d}\widetilde L^{d-2} \prod_{i=3}^d(w_j^\star)^{1/2},
   \end{align*}
   where $\displaystyle C_{\rm stab}:=\max_{1\leq i\leq d\atop 1\leq j\leq w_{i+1}}|\bRR(\bW_0,\bB_0,\ldots,\bW_{i},\bB_{i})_j|$ and $\displaystyle C_\WW:=\max_{k=1,\ldots,d\atop 1\leq i\leq w_k,\,1\leq j\leq w_{k+1}} \max\{|(\bW_k)_{ij}|,|(\bB_k)_i|\}$.
\end{lemma}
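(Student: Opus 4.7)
Setting $\bv:=\by-\bH\bRR(\WW_\ell)$, which is entrywise non-negative by hypothesis, and assuming~\eqref{eq:implication0}, my plan is to apply the layerwise stability decomposition iteratively from the top layer down to layer~$3$ in order to extract a single-neuron ``thread'' through $G$, and then to collapse the remaining two hidden layers by subadditivity of $\phi$ to exhibit a candidate in $\RR(\chi^\star)=\RR(w_0,1,1)$. Since the optimizer $\WW^\star$ of~\eqref{eq:Wstar} is by definition at least as good as any single candidate we exhibit, it suffices to construct one with the required normalized correlation against $\bv$.

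For the opening step, at layer $d$ I decompose $\bH\bG=\sum_{k=0}^{w_d}\bz_k^{(d)}$ as in Definition~\ref{def:Lstable} (bias plus weighted neurons). There is no outer $\phi$ at the top layer, so Lemma~\ref{lem:independence}~\eqref{eq:stability} with $\by=\bv$ directly produces an index $k_d$ with $|\bv\cdot\bz_{k_d}^{(d)}|/|\bz_{k_d}^{(d)}|\ge \gamma|\bv|/(\widetilde L\sqrt{w_d^\star})$; the bias case $k_d=0$ is handled separately by the constant function, already in $\RR(\chi^\star)$. For the subsequent steps at layers $i=d-1,\dots,3$ I would decompose $\RR_i(x)_{k_{i+1}}=\sum_m\bz_m^{(i)}$ using the layer-$i$ stability hypothesis; because the outer $\phi$ now sits on top of this sum inside the previously-chosen sub-network, I apply Lemma~\ref{lem:independence}~\eqref{eq:stabilitypos} (valid thanks to $\bv\ge 0$) to extract $k_i$. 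Upper-bounding $|\bH\RR_i(\cdot)_{k_{i+1}}|\le C_{\rm stab}$ at the appropriate moment converts the ratio estimate into an absolute correlation bound and contributes the single $C_{\rm stab}$ factor in $L$; and the pointwise identity $\phi(c\,\phi(y))=c_+\phi(y)$ (exact for pure ReLU by sign considerations on $\phi(y)\ge 0$, and valid up to a controllable two-sided factor for Leaky-ReLU) lets me strip the outer $\phi$ at each step at the cost of one weight factor $C_\WW$. After $d-2$ iterations the retained sub-network has been reduced to a scalar multiple of $\phi(\RR_2(\cdot)_{k_3})$.

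For the final depth-$2$ to depth-$1$ collapse, I unfold $\phi(\RR_2(x)_{k_3})$ twice by subadditivity $\phi(a+b)\le\phi(a)+\phi(b)$ (together with $\phi(cy)=c_+ y$ for $y\ge 0$) to obtain a pointwise majorant
\begin{equation*}
\phi(\RR_2(x)_{k_3})\ \le\ \beta+\sum_{p=1}^{w_1}\alpha_p\,\psi_p(x),\qquad\psi_p(x):=\phi\bigl((\bW_0)_{p,:}\cdot x+(\bB_0)_p\bigr),
\end{equation*}
where $\beta,\alpha_p\ge 0$ are bounded by constant multiples of $C_\WW^2$ (these are the last two of the $d$ weight factors in $L$). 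Each $\psi_p$ and the constant function $\b1$ belong to $\RR(\chi^\star)$, so testing the majorant against $\bv\ge 0$ and a max-argument (using $|\bH\psi_p|\le C_{\rm stab}$) produces some $\tilde\psi\in\RR(\chi^\star)$ that inherits the accumulated correlation bound; by definition of $\WW^\star$ in~\eqref{eq:Wstar} the bound transfers. Collecting factors yields exactly $L\sqrt{\#\chi/\#\chi^\star}=C_{\rm stab}\,C_\WW^d\,\widetilde L^{d-2}\prod_{i=3}^d(w_i^\star)^{1/2}$. The main obstacle is precisely this bookkeeping: Lemma~\ref{lem:independence}~\eqref{eq:stabilitypos} produces a ratio whose denominator is $|\bH\RR_i|$ rather than $|\bH\phi(\RR_i)|$, so at each layer one must carefully choose whether to propagate absolute or normalized correlations, and the non-negativity of $\bv$ is what allows these two normalizations to be reconciled without incurring a shrinkage factor $|\bH\phi(\cdot)|/|\bH(\cdot)|$ at every single layer.
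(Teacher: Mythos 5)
Your top-down layer-stripping -- decomposing the output of each layer via the stability hypothesis, invoking the non-negativity of $\by-\bH\bRR(\WW_\ell)$ so that Lemma~\ref{lem:independence}~\eqref{eq:stabilitypos} applies, and peeling one $\phi$ per layer at the cost of one factor $C_\WW$ -- matches the paper's induction (the paper's version tracks a candidate $G_j$ of the form $\beta_j\phi(\RR_{j-1}(\cdot)_{k_j})$ with $|\beta_j|\le C_\WW^{d-j}$, decrementing $j$ from $d$). Where you diverge is the final collapse.

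The paper continues to apply Lemma~\ref{lem:independence} \emph{once more}, at layer $2$ -- a use of the stability hypothesis which the lemma statement explicitly grants you (the condition holds for all $2\le i\le d$) -- and then simply rescales $G_2$ by $C_{\rm stab}^{-1}C_\WW^{-d+2}$ to normalize it. You instead stop the stability iteration at $\phi(\RR_2(\cdot)_{k_3})$ and try to close the remaining gap by a pointwise majorant built from subadditivity. This step does not deliver the claimed constant, for three concrete reasons. First, the coefficients $\alpha_p=\sum_{k=1}^{w_2}((\bW_2)_{k_3,k})_+((\bW_1)_{k,p})_+$ in your majorant are not $O(C_\WW^2)$ as you assert; they are sums over $w_2$ indices and are only bounded by $w_2\,C_\WW^2$. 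Second, passing from $\sum_{p=1}^{w_1}\alpha_p\,\bv\cdot\bH\psi_p$ to its largest summand costs a factor $w_1$, not $\sqrt{w_1}$ -- the $\sqrt{\cdot}$ savings in Lemma~\ref{lem:independence} comes precisely from the Cauchy--Schwarz trick that your crude max forgoes. Together that injects an uncontrolled factor $\sim w_1w_2$ that does not appear in $L=\sqrt{\#\chi^\star/\#\chi}\,C_{\rm stab}C_\WW^d\widetilde L^{d-2}\prod_{i=3}^d(w_i^\star)^{1/2}$. Third, and you flag this yourself as ``the main obstacle'': a max over the unnormalized quantities $\alpha_p\,\bv\cdot\bH\psi_p$ gives no handle on the \emph{ratio} $\bv\cdot\bH\psi_p/|\bH\psi_p|$ that the optimizer~\eqref{eq:Wstar} is actually measured against; without a lower bound on $|\bH\psi_p|$ (which $C_{\rm stab}$ as defined does not supply), the extracted candidate may normalize away. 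The paper's route -- one more application of Lemma~\ref{lem:independence} -- produces the correlation ratio and the $\sqrt{w^\star}$ degradation by construction. (Incidentally, the paper's own induction visibly terminates at $G_2\in\RR((w_0,w_1,1,1))$, a depth-$2$ architecture, so the stated $\chi^\star=(w_0,1,1)$ and the exponent $\widetilde L^{d-2}$ appear to carry off-by-one inconsistencies in the source; but whichever target one takes, the $w_1w_2$ loss in your majorant argument is a genuine departure from what is claimed.)
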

\begin{proof}
For simplicity, we define $\bz:=\by-\bH\bRR(\WW_\ell)$.
We show by induction on the layers $j=d,\ldots,3$ that there exists $G_j\in \RR( (w_0,\ldots,w_{j-1},1,1))$ such that
\begin{subequations}\label{eq:layerind}
\begin{align}\label{eq:layerind0}
    G_j = \beta_j \phi(\RR(\bW_0,\bB_0,\ldots,\bW_{j-1},\bB_{j-1}))_{k_j}\quad\text{or}\quad G_j=\beta_j
\end{align}
for some $\beta_j\in\R$ and $k_j\in\{1,\ldots,w_j\}$ such that $|\beta_j|\leq C_\WW^{d-j}$. Moreover, there holds
\begin{align}\label{eq:layerind1}
    \bz\cdot \bG_j\geq  C_j:= C_{\rm stab}^{-1}C_\WW^{j+1-d}\gamma \widetilde L^{j+1-d} \prod_{i=1}^d(w_j^\star)^{-1/2}.
\end{align}
\end{subequations}
For the final layer $j=d$, Lemma~\ref{lem:independence} and~\eqref{eq:deepstab} show that there exists $G_d\in \RR( (w_0,\ldots,w_{d-1},1,1))$ that satisfies~\eqref{eq:layerind0} and
\begin{align*}
    \bz\cdot \bG_d\geq \gamma/(\widetilde L \sqrt{w_d^\star})\quad\text{such that }\quad |\phi(\bG_d)|\leq |\bG_d|\leq |\bG|,
\end{align*}
which implies the assertion~\eqref{eq:layerind1}.
Assume that~\eqref{eq:layerind} holds for some $j<d$. 
Note that $G_j$ is either the constant function $G_j(x)=\beta_j$, or it can be written as $G_j(x) =\beta_j\cdot \phi(\widetilde G_j(x))$ for
\begin{align*}
    \widetilde G_j = \RR(\bW_0,\bB_0,\ldots,\bW_{j-1},\bB_{j-1})_{k_j} \in \RR((w_0,\ldots,w_{j-1},1)).
\end{align*}
In the first case, we are done since the constant function $x\mapsto \bB_j$ is in $\RR((w_0,\ldots,w_{j-2},1,1))$. Otherwise, we know that $\bz\cdot \beta_j\phi(\widetilde \bG_j)\geq C_j$ and also $\beta_j>0$.
Since $\phi$ is homogeneous, we may consider $\beta_j\widetilde G_j\in\RR((w_0,\ldots,w_{j-1},1))$. Since scaling does not change $L$-stability, we may apply Lemma~\ref{lem:independence} to $\bz\cdot\phi(\beta_j\widetilde \bG_j)$ and find $G_{j-1}\in\RR((j-1,w_0,\ldots,w_{j-2},1,1))$ such that~\eqref{eq:layerind0} holds and 
\begin{align*}
 \bz\cdot\phi( \bG_{j-1})\geq C_j/(\widetilde L\sqrt{w_{j-1}^\star})\quad \text{and}\quad |\bG_{j-1}|\leq|\beta_j\widetilde \bG_j|\leq C_\WW^{d-j} C_{\rm stab}.
\end{align*}
In the second case of~\eqref{eq:layerind0},  we are done since the constant function $x\mapsto \phi(\beta_{j-1})$ is contained in $\RR((w_0,\ldots,w_{j-3},1,1))$. In the other case, we know $\beta_{j-1}\geq 0$ and hence $\phi(\bG_{j-1})=\phi(\beta_{j-1}\phi(\widehat \bG_{j-1}'))=\beta_{j-1}\phi(\widehat \bG_{j-1}') = \bG_{j-1}$.
Moreover, there holds $|\beta_{j-1}|\leq C_\WW|\beta_j|$.
This concludes the induction.
Scaling the network $G':=C_{\rm stab}^{-1}C_\WW^{-d+2}G_2$ implies $|\bG'|\leq 1 $ and thus concludes the proof.
\end{proof}

\subsection{Analysis of Algorithm~\ref{alg:inner}}

The first result is an immediate consequence of~\eqref{eq:implication}.
\begin{lemma}\label{lem:reduction}
 Let $F\in\RR(\chi)$ and let $G$ satisfy~\eqref{eq:implication} as well as
\begin{align*}
\LL(F+G)   \leq  \kappa \LL(F).
\end{align*}
Then, the weights $\WW^{\star}\in\W(\chi^\star)$ satisfy
\begin{align*}
  \LL(F+ \alpha\RR(\WW^\star))
 \leq  \Big(1-\frac{(1-\kappa)^2}{{8}L^2 \#\chi/\#\chi^\star}\Big)\LL(F).
\end{align*}
for ${\alpha}= \frac{1-\kappa}{{4}L\sqrt{\#\chi/\#\chi^\star}}\sqrt{\LL(F)}$.
\end{lemma}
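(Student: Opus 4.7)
My plan is to chain Lemma~\ref{lem:descentexists}, Assumption~\ref{ass:implication}~\eqref{eq:implication}, and Lemma~\ref{lem:chainrule}, then optimize in $\alpha$. The hypothesis $\LL(F+G)\leq\kappa\LL(F)$ is a loss reduction that forces $\bH\bG$ to have a large cosine overlap with the residual $\by-\bH\bF$; the implication~\eqref{eq:implication} transfers this overlap to $\WW^\star$ up to the factor $L\sqrt{\#\chi/\#\chi^\star}$; and Lemma~\ref{lem:chainrule} converts the overlap into the desired quadratic reduction.

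First I would extract the cosine bound for $G$. The second statement of Lemma~\ref{lem:descentexists} together with $|\LL(F+G)-\LL(F)|\geq(1-\kappa)\LL(F)$ yields
\begin{equation*}
\frac{(\by-\bH\bF)\cdot\bH\bG}{|\by-\bH\bF|\,|\bH\bG|}\;\geq\;\gamma
\end{equation*}
for $\gamma$ proportional to a power of $1-\kappa$, which is precisely hypothesis~\eqref{eq:implication0}. Its conclusion~\eqref{eq:implication1} then gives the same cosine bound (with $\gamma$ shrunk by the factor $1/(L\sqrt{\#\chi/\#\chi^\star})$) for $\bH\bRR(\WW^\star)$ in place of $\bH\bG$.

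Next I would saturate the constraint in~\eqref{eq:Wstar}: the scaling~\eqref{eq:scaling} combined with~\eqref{eq:Rscaling} makes $\{\bH\bRR(\WW):\WW\in\W(\chi^\star)\}$ a positive cone, and since the linear objective in~\eqref{eq:Wstar} is strictly positive at the maximizer by the previous step, one must have $|\bH\bRR(\WW^\star)|=1$. Combined with $|\by-\bH\bF|=\sqrt{\LL(F)}$, this converts the cosine bound into an absolute lower bound on $(\by-\bH\bF)\cdot\bH\bRR(\WW^\star)$ of order $\sqrt{\LL(F)}/(L\sqrt{\#\chi/\#\chi^\star})$ times a power of $1-\kappa$.

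Finally I would plug into the identity of Lemma~\ref{lem:chainrule},
\begin{equation*}
\LL(F+\alpha\RR(\WW^\star))-\LL(F)\;=\;-2\alpha(\by-\bH\bF)\cdot\bH\bRR(\WW^\star)+\alpha^2,
\end{equation*}
where the stated $\alpha$ is precisely the minimizer of the resulting upper bound in $\alpha$, producing a relative reduction of $\LL(F)$ of the advertised form $c(1-\kappa)^{2}/(L^2\#\chi/\#\chi^\star)$. Matching the stated constant $c=1/8$ calls for the sharper AM--GM version of the first step, namely applying Lemma~\ref{lem:chainrule} directly at $\alpha=1$ rather than taking the $\alpha\to 0$ limit used in Lemma~\ref{lem:descentexists}. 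The only nontrivial point in the whole argument is the cone-boundary saturation of the middle step, which hinges on positivity of the objective together with the homogeneity of the realization map; everything else is elementary algebra guided by the three lemmas above.
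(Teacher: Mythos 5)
Your main line of argument is correct and is essentially identical to the paper's proof: extract the cosine bound from Lemma~\ref{lem:descentexists} with $|\LL(F+G)-\LL(F)|\geq(1-\kappa)\LL(F)$ (which gives $\gamma=(1-\kappa)/4$ in~\eqref{eq:implication0}), pass through~\eqref{eq:implication1}, saturate the constraint $|\bH\bRR(\WW^\star)|=1$, and minimize the quadratic from Lemma~\ref{lem:chainrule} in $\alpha$. You are also right to single out the saturation $|\bH\bRR(\WW^\star)|=1$ as the one step the paper leaves implicit (it uses $|\bH\bRR(\WW^\star)|\leq 1$ to bound the quadratic term but needs equality when converting the cosine bound into a lower bound on the inner product); the cone argument via the homogeneous scaling~\eqref{eq:scaling}--\eqref{eq:Rscaling} and strict positivity of the linear objective is the right way to justify it.

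Your closing speculation about ``matching the constant $1/8$ by AM--GM'' is off, though. Carrying out the very calculation you describe (Lemma~\ref{lem:descentexists} $\Rightarrow$ $\gamma=(1-\kappa)/4$ $\Rightarrow$ linear term $\alpha\cdot\frac{1-\kappa}{2L\sqrt{\#\chi/\#\chi^\star}}\sqrt{\LL(F)}$, minimize $-\alpha b+\alpha^2$ at $\alpha=b/2$) yields a reduction of $\frac{(1-\kappa)^2}{16L^2\#\chi/\#\chi^\star}$, not $\frac{(1-\kappa)^2}{8L^2\#\chi/\#\chi^\star}$, while the stated $\alpha$ does match $b/2$; this looks like a factor-of-two slip in the paper rather than a missing sharpness. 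Moreover, the AM--GM variant you propose (applying Lemma~\ref{lem:chainrule} to $G$ at $\alpha=1$ directly) does not recover the stated bound: it produces $\gamma=\sqrt{1-\kappa}$, hence a reduction proportional to $1-\kappa$ rather than $(1-\kappa)^2$, and a different optimal $\alpha$, so it changes the functional form of the result rather than tightening the constant. Your main proof stands; just drop the AM--GM fix-up and note that the constant you obtain is $16$ rather than $8$.
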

\begin{proof}
We employ Lemma~\ref{lem:descentexists} to obtain
 \begin{align*}
 2(\by-\bH\bF)\cdot \frac{\bH\bG}{|\bH\bG|}\geq  \frac{1-\kappa}{2}\sqrt{\LL(F)}.
 \end{align*}
 Thus,~\eqref{eq:implication} shows that the  weights $\WW^{\star}$ from~\eqref{eq:Wstar} satisfy
 \begin{align*}
2(\by-\bH\bF)\cdot \bH\bRR(\WW^{\star})&\geq \frac{1-\kappa}{2L\sqrt{\#\chi/\#\chi^\star}}\sqrt{\LL(F)}.
 \end{align*}
 Thus, Lemma~\ref{lem:chainrule} shows
 \begin{align*}
  \LL(&F+ \alpha\RR(\WW^{\star})) - \LL(F) \\
 &= -2\alpha (\by-\bH\bF)\cdot \bH\bRR(\WW^{\star})+ {\alpha^{2}} |\bH\bRR(\WW^{\star})|^2\\
 &\leq  -\alpha \frac{1-\kappa}{2L\sqrt{\#\chi/\#\chi^\star}}\sqrt{\LL(F)}+ {\alpha^{2}}
 \end{align*}
 With ${\alpha}= \frac{1-\kappa}{{4}L\sqrt{\#\chi/\#\chi^\star}}\sqrt{\LL(F)}$ this implies
\begin{align*}
  \LL(F+ \alpha\RR(\WW^{\rm opt})) - \LL(F) 
 &\leq   -\frac{(1-\kappa)^2}{{8}L^2\#\chi/\#\chi^\star}\LL(F)
 \end{align*}
 and concludes the proof.
\end{proof}
\begin{remark}
    Lemma~\ref{lem:reduction} shows that $\WW^\star$ is a useful descent direction in order to reduce the loss in Algorithm~\ref{alg:inner}. One could hope that, instead of solving the optimization problem~\eqref{eq:Wstar}, one can find this descent direction by just initializing the untrained weights of $\WW_{\ell+1}$ with zero and then computing the gradient with respect to the new weights. This fails since the activation function $\phi$ is not differentiable at zero. In fact, it is not possible to find an activation function $\phi$ that fits our proof technique, produces a rich space of realizations $\RR(\chi)$, and is differentiable at zero. To see that, note that we require $\phi$ to be homogeneous of some degree $k\in\N$.
    This, however, implies that any derivative of order $0\leq j<k$ with respect to the weights (at zero), will be zero itself. Thus, in order to find the descent direction by computing derivatives, we need at least the derivatives of order $k$. If the $k$-th derivatives exist at zero, then $\phi$ has to be a polynomial of degree $k$, which implies that $\RR(\chi)$ is just the space of polynomials of degree $k$ and thus can not be a good approximation space for general data.
\end{remark}


\begin{remark}
    Since $\WW^\star$ is used as an update direction to decrease the loss, one could alternatively try to solve
     \begin{align}\label{eq:WstarDirect}
 \WW^{\star\star} = {\rm arg}\min_{\WW\in \W(
 \chi^{\star})\atop |H\RR(\WW)| \leq 1} |\by-\bH\bF-\bH\bRR(\WW)|^2.
\end{align}
Then, the scaling happens as part of the optimization, i.e., $\RR(\WW^{\star\star})=\alpha \RR(\WW^\star)$ with $\alpha$ from Lemma~\ref{lem:reduction}.
However, if we assume that the maximum reached in~\eqref{eq:Wstar} is equal to some $\eps>0$ (typically, it will be very small), then we have with optimal choice of $\alpha$ that
\begin{align*}
   \min_{\WW\in \W(
 \chi^{\star})\atop |H\RR(\WW)| \leq 1} |\by-\bH\bF-\bH\bRR(\WW)|^2=|\by-\bH\bF|^2-\eps^2
\end{align*}
Thus the minimum in~\eqref{eq:WstarDirect} is quadratically less pronounced than the maximum in~\eqref{eq:Wstar} and hence much harder to find by optimization.
\end{remark}
{\begin{remark}
   One can further optimize the update in Step~3 of Algorithm~\ref{alg:inner} by additionally scaling $\WW_\ell$.
   To that end, note that
   \begin{align*}
       |\by-\bH\bF-\beta \bH\bRR(\WW_\ell)-\alpha \bH\bRR(\WW^\star)|^2
   \end{align*}
   can be minimized by setting the gradient with respect to $(\alpha,\beta)$ of the expression above to zero. Setting $H={\rm id}$ for simplicity, this results in a $2\times2$-linear system with the solution
    \begin{align*}
        \alpha &= \frac{\big(|\bRR(\WW_\ell)|^2\bRR(\WW^\star)-\bRR(\WW_\ell)\cdot\bRR(\WW^\star)\bRR(\WW^\star)\big)\cdot(\by-\bF)}{|\bRR(\WW_\ell)|^2|\bRR(\WW^\star)|^2-|\bRR(\WW_\ell)\cdot \bRR(\WW^\star)|^2},\\
        \beta &= \frac{\big(\bRR(\WW_\ell)\cdot\bRR(\WW^\star)\bRR(\WW_\ell)-|\bRR(\WW^\star)|^2\bRR(\WW_\ell)\big)\cdot(\by-\bF)}{|\bRR(\WW_\ell)|^2|\bRR(\WW^\star)|^2-|\bRR(\WW_\ell)\cdot \bRR(\WW^\star)|^2}.
    \end{align*}
 \end{remark} }

The following lemma ensures that Algorithm~\ref{alg:inner} does not change the realization of the networks $\RR(\WW_\ell)$ too much unless the loss is significantly improved. This implies a certain stability of the algorithms that will be useful later.
\begin{lemma}\label{lem:intermediateOpt}
The output of Algorithm~\ref{alg:inner} satisfies
\begin{align*}
    |\bH\bRR(\WW_{\ell})-\bH\bRR(\WW_0)|^2 {=} (\LL(\WW_0)-\LL(\WW_\ell)).
\end{align*}
\end{lemma}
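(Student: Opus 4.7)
\emph{Proof plan.} Set $\br_0 := \by - \bH\bRR(\WW_0)$ and $\br_\ell := \by - \bH\bRR(\WW_\ell)$, so that $\bH\bRR(\WW_\ell) - \bH\bRR(\WW_0) = \br_0 - \br_\ell$. A direct expansion shows that the claimed identity
\begin{align*}
|\bH\bRR(\WW_\ell) - \bH\bRR(\WW_0)|^2 = \LL(\WW_0) - \LL(\WW_\ell)
\end{align*}
is equivalent to the Galerkin-type orthogonality
\begin{align*}
\br_\ell \cdot \big(\bH\bRR(\WW_\ell) - \bH\bRR(\WW_0)\big) = 0,
\end{align*}
via the identity $|\br_0|^2 - |\br_\ell|^2 = |\br_0 - \br_\ell|^2 + 2(\br_0 - \br_\ell)\cdot\br_\ell$. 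The entire task thus reduces to establishing this orthogonality.

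By the $\oplus$-construction in Step~3 of Algorithm~\ref{alg:inner} together with the block-diagonal definition of $\oplus$ on hidden layers and the linear combination at the output, the weights produced at iteration $\ell$ admit the decomposition
\begin{align*}
\bRR(\WW_\ell) = \bRR(\WW_0) + \sum_{k=1}^{\ell} \bRR(\widetilde{\WW}^{(k)}),
\end{align*}
where $\widetilde{\WW}^{(k)}$ is the $k$-th added sub-network (possibly later modified by Step~1 of subsequent iterations). Since the weights in $\WW_0$ are frozen, \emph{all} parameters of every $\widetilde{\WW}^{(k)}$ are non-frozen. The decoupling inherent in $\oplus$ further implies $\partial_w \bRR(\WW_\ell) = \partial_w \bRR(\widetilde{\WW}^{(k)})$ for any $w \in \widetilde{\WW}^{(k)}$.

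Now I use the stationarity from Step~1: at $\WW_\ell$, $\partial_w \LL(\WW_\ell) = -2\,\br_\ell \cdot \bH\partial_w \bRR(\widetilde{\WW}^{(k)}) = 0$ for every non-frozen weight $w \in \widetilde{\WW}^{(k)}$. To pass from these componentwise identities to orthogonality against the whole vector $\bH\bRR(\widetilde{\WW}^{(k)})$, I invoke the homogeneity built into the scaling~\eqref{eq:scaling}-\eqref{eq:Rscaling}, namely $\bRR(\alpha\widetilde{\WW}^{(k)}) = \alpha \bRR(\widetilde{\WW}^{(k)})$. Differentiating at $\alpha=1$ and applying the chain rule expresses $\bRR(\widetilde{\WW}^{(k)})$ as a specific linear combination of the partial derivatives $\partial_w \bRR(\widetilde{\WW}^{(k)})$ over $w \in \widetilde{\WW}^{(k)}$ (with coefficients given by the scaling exponents times the weight values themselves). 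Taking the inner product with $\bH^T\br_\ell$ and applying stationarity termwise yields $\br_\ell \cdot \bH\bRR(\widetilde{\WW}^{(k)}) = 0$ for each $k$, and summing over $k$ gives the desired orthogonality.

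The main subtle step is this last one: translating the first-order stationarity, which only gives orthogonality to the individual partials, into orthogonality against the full realization $\bH\bRR(\widetilde{\WW}^{(k)})$. This is precisely where the particular homogeneous scaling~\eqref{eq:scaling} introduced in Section~2.3 earns its keep; without it one would only obtain orthogonality against the linear span of the penultimate-layer activations. The remaining bookkeeping---that every weight of every $\widetilde{\WW}^{(k)}$ is indeed non-frozen at $\WW_\ell$, and that the $\oplus$-decomposition is preserved under the optimizer in Step~1---is routine from the definitions. The Pythagorean identity then closes the argument.
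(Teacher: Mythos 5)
Your proof is correct and follows essentially the same route as the paper: reduce the identity to the Galerkin-type orthogonality $(\by-\bH\bRR(\WW_\ell))\cdot(\bH\bRR(\WW_\ell)-\bH\bRR(\WW_0))=0$ via the Pythagorean expansion, then obtain that orthogonality by combining stationarity of $\WW_\ell$ with respect to the non-frozen weights with the homogeneous scaling \eqref{eq:scaling}--\eqref{eq:Rscaling}. The paper phrases the homogeneity step as $0=\partial_\alpha\LL(\WW_0\oplus\alpha\WW_{\ell,\perp})|_{\alpha=1}$ applied to the \emph{aggregate} non-frozen block $\WW_{\ell,\perp}$, which is exactly your Euler-identity argument but applied once rather than per sub-network; your finer decomposition $\bRR(\WW_\ell)=\bRR(\WW_0)+\sum_k\bRR(\widetilde\WW^{(k)})$ is unnecessary and, strictly speaking, requires the optimizer in Step~1 to preserve the block-diagonal structure \emph{among} the added sub-networks (not just between $\WW_0$ and the rest)---working directly with $\WW_{\ell,\perp}$ as the paper does avoids that extra assumption.
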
 
\begin{proof}
By design of Algorithm~\ref{alg:inner}, the weights $\WW_\ell$ are the sum of the individual updates $\WW^\star$. Thus, we may write $\WW_\ell=\WW_0\oplus\WW_{\ell,\perp}$. By definition of $\oplus$, this implies $\RR(\WW_{\ell,\perp})=\RR(\WW_\ell)-\RR(\WW_0)$. Since $\WW_\ell$ is a stationary point of $\LL(\cdot)$, we have
 \begin{align*}
    0&= \partial_\alpha \LL(\WW_0\oplus \alpha \WW_{\ell,\perp})|_{\alpha=1} =\partial_\alpha 2(\by-\bH\bRR(\WW_{\ell}))\cdot \bH\bRR(\alpha\WW_{\ell,\perp})|_{\alpha=1}\\
    &= 2(\by-\bH\bRR(\WW_{\ell}))\cdot \partial_\alpha(\alpha \bH\bRR(\WW_{\ell,\perp}))|_{\alpha=1}
    =2(\by-\bH\bRR(\WW_{\ell}))\cdot (\bH\bRR(\WW_{\ell})-\bH\bRR(\WW_{0}))
 \end{align*}
 and hence
 \begin{align*}
     |\bH\bRR(\WW_\ell)-\bH\bRR(\WW_{0})|^2 =  |\by-\bH\bRR(\WW_{0})|^2 - |\by-\bH\bRR(\WW_{\ell})|^2 =\LL(\WW_0)-\LL(\WW_\ell). 
 \end{align*}
This concludes the proof. 
\end{proof}

The following result is the heart of the optimality results below. It shows that if a good extension of the network exists in theory, Algorithm~\ref{alg:inner} can find a comparably good approximation with a similar number of weights.
\begin{theorem}\label{thm:inner}
Choose $\frac{\sqrt{5}-1}{2}<\kappa<1$, define $0<\mu:=(\kappa-\sqrt{1-\kappa})^2<1$, and let $F:=\RR(\WW_0)\in\RR(\chi_0)$. Let $G\in\RR(\chi')$ satisfy~\eqref{eq:implication} as well as $\LL(F+G)\leq \mu \LL (F)$. Then, Algorithm~\ref{alg:inner} produces $\WW_\ell$ after $\ell\leq L_{\rm max}:=C_{\rm desc} w^\star$ steps with 
$\LL(\WW_\ell)\leq \kappa \LL(\WW_0)$ and $\#\WW_\ell-\#\WW_0 \leq L C_{\rm desc} \#\chi'$, where $C_{\rm desc}:=\frac{{8}L^2\log(3/2)}{(1-\kappa)^2}$.
\end{theorem}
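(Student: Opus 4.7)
The plan is to show that the theoretical improving network $G$ from the hypothesis remains a viable descent direction with respect to every intermediate iterate $\RR(\WW_\ell)$ of Algorithm~\ref{alg:inner}, not merely the initial iterate $\RR(\WW_0)$, as long as the stopping criterion $\LL(\WW_\ell)\leq\kappa\LL(\WW_0)$ has not yet been reached. This will let me invoke Lemma~\ref{lem:reduction} at each step and obtain a uniform geometric contraction of the loss, from which both the iteration count and the parameter count follow by routine estimates. The specific choice $\mu=(\kappa-\sqrt{1-\kappa})^2$ is engineered exactly so that this ``descent transfer'' works.

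The main step transfers the one-shot improvement assumption $\LL(F+G)\leq\mu\LL(F)$ to every iterate. Setting $F_\ell:=\RR(\WW_\ell)$ and assuming the stopping criterion is not yet met, i.e.\ $\LL(\WW_\ell)>\kappa\LL(\WW_0)$, Lemma~\ref{lem:intermediateOpt} yields $|\bH\bF_\ell-\bH\bF_0|^2=\LL(\WW_0)-\LL(\WW_\ell)\leq(1-\kappa)\LL(\WW_0)$. Hence, by the triangle inequality,
\begin{align*}
\sqrt{\LL(F_\ell+G)}=|\by-\bH\bF_\ell-\bH\bG|\leq|\by-\bH\bF_0-\bH\bG|+|\bH\bF_0-\bH\bF_\ell|\leq(\sqrt{\mu}+\sqrt{1-\kappa})\sqrt{\LL(\WW_0)}.
\end{align*}
The restriction $\kappa>(\sqrt{5}-1)/2$ is precisely the one that makes $\kappa-\sqrt{1-\kappa}>0$, so the definition of $\mu$ gives the clean identity $\sqrt{\mu}+\sqrt{1-\kappa}=\kappa$. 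Squaring and then using $\LL(\WW_\ell)>\kappa\LL(\WW_0)$ one more time yields $\LL(F_\ell+G)\leq\kappa^2\LL(\WW_0)\leq\kappa\LL(\WW_\ell)$, which is exactly the hypothesis required to apply Lemma~\ref{lem:reduction} (with the present $\kappa$ in the role of the $\kappa$ there) to $F_\ell$ and to the descent weights $\WW^\star$ computed in Step~(2).

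Lemma~\ref{lem:reduction} then gives $\LL(\WW_{\ell+1})\leq(1-q)\LL(\WW_\ell)$ with $q:=(1-\kappa)^2/(8L^2 w^\star)$, where the factor $w^\star$ absorbs the ratio $\#\chi'/\#\chi^\star$ appearing in~\eqref{eq:implication1}. Iterating produces $\LL(\WW_\ell)\leq(1-q)^\ell\LL(\WW_0)\leq e^{-q\ell}\LL(\WW_0)$, so the stopping criterion $\LL(\WW_\ell)\leq\kappa\LL(\WW_0)$ is reached as soon as $q\ell\geq\log(1/\kappa)$, which (with the factor $\log(3/2)$ absorbed into the definition of $C_{\rm desc}$) amounts to $\ell\leq L_{\rm max}=C_{\rm desc}w^\star$. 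The parameter count is then immediate: each iteration of Step~(3) appends a sub-network of architecture $\chi^\star$, so $\#\WW_\ell-\#\WW_0\leq L_{\rm max}\#\chi^\star=C_{\rm desc}w^\star\#\chi^\star$, and the stability relation $w^\star\#\chi^\star\leq L\#\chi'$ from Definition~\ref{def:Lstable} converts this into $\#\WW_\ell-\#\WW_0\leq LC_{\rm desc}\#\chi'$. The genuinely delicate point is the descent-transfer estimate of the second paragraph; the remaining steps are bookkeeping for a geometric series.
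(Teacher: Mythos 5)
Your proof follows exactly the same strategy as the paper's: transfer the one-shot improvement to each iterate $F_\ell$ via Lemma~\ref{lem:intermediateOpt} and the triangle inequality, exploit the identity $\sqrt{\mu}+\sqrt{1-\kappa}=\kappa$ to rewrite the bound as $\LL(F_\ell+G)\leq\kappa\LL(F_\ell)$, invoke Lemma~\ref{lem:reduction} to get a uniform geometric contraction, and convert the iteration count into a parameter count via $w^\star\#\chi^\star\leq L\#\chi'$. The only cosmetic difference is that you solve for the step count directly from $\LL(\WW_\ell)\leq\kappa\LL(\WW_0)$ via $\log(1/\kappa)$, whereas the paper stops at $\LL(F^{(\ell)})\leq(2/3)\LL(F)$ and uses $\log(3/2)$ — both share the same minor mismatch (for $\kappa$ slightly below $2/3$ the stated constant $C_{\rm desc}$ is slightly too small), so this is not a deviation.
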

\begin{proof}
We use the shorthand $F^{(\ell)}:=\RR(\WW_\ell)$ and first show 
\begin{align*}
    \sqrt{\LL(F^{(\ell)}+G)} &= |\by-\bH\bF^{(\ell)}-\bH\bG|
    \leq |\by-\bH\bF^{(0)}-\bH\bG| + |\bH\bF^{(0)}-\bH\bF^{(\ell)}|\\
   & \leq \sqrt{\LL(F+G)} + \sqrt{\LL(F)-\LL(F^{(\ell)})}\\
   &\leq \sqrt{\mu}\sqrt{\LL(F)} + \sqrt{\LL(F)-\LL(F^{(\ell)})},
\end{align*}
where we used  Lemma~\ref{lem:intermediateOpt} for the second inequality.
If $\LL(F^{(\ell)})\geq \kappa \LL(F)$, this implies that
\begin{align*}
    \sqrt{\LL(F^{(\ell)}+G)} & \leq  \sqrt{\mu}\sqrt{\LL(F)}+ \sqrt{(1-\kappa) \LL(F)} = \frac{\sqrt{\mu}+\sqrt{1-\kappa}}{\sqrt{\kappa}}\sqrt{\LL(F^{(\ell)})}=\sqrt{\kappa}\sqrt{\LL(F^{(\ell)})}.
\end{align*}
Heuristically, this shows that if Algorithm~\ref{alg:inner} fails to improve the loss by a certain amount in step $\ell$, we can still add $G$ to $F^{(\ell)}$ to reduce the loss (just as we can do with $F^{(0)}$, i.e., Algorithm~\ref{alg:inner} does not destroy previous progress. 
Particularly, in each iteration of Step~2 of Algorithm~\ref{alg:inner}, we may apply Lemma~\ref{lem:reduction} to obtain
\begin{align*}
    \LL(F^{(\ell+1)})\leq \LL(F^{(\ell)}+\alpha \RR(\WW^{\rm opt}_\ell))\leq (1-\frac{(1-\kappa)^2}{4L^2\#\chi/\#\chi^\star})\LL(F^{(\ell)}).
\end{align*}
Thus, for
\begin{align*}
\ell \simeq  |\log(2/3)|\Big/\log\Big(1-\frac{(1-\kappa)^2}{{8}L^2\#\chi/\#\chi^\star}\Big)\leq \frac{{8}L^2|\log(2/3)|}{(1-\kappa)^2}\,\#\chi/\#\chi^\star
\end{align*}
this implies
\begin{align*}
    \LL(F^{(\ell)})\leq \frac23 \LL(F).
\end{align*}
In each iteration of Step~2 of Algorithm~\ref{alg:inner}, $\#\WW_\ell$ grows by $\#\chi^\star$. This concludes the proof.
\end{proof}

The following result shows that Algorithm~\ref{alg:adaptive} produces optimal training results under certain assumptions.
\begin{theorem}\label{thm:main}
Choose $\kappa$ and $\mu$ as in Theorem~\ref{thm:inner}.
Given $\WW_0\in\W(\chi_0)$ and let $(\WW_\ell)_{\ell\in\N}$ denote the output of Algorithm~\ref{alg:adaptive}. With $F_\ell:=\RR(\WW_\ell)$, assume that for all $k,\ell\in\N$, there exists $G_{k,\ell}\in\RR(\chi_{k,\ell})$ such that $\LL(F_\ell+G_{k,\ell})\leq 2^{-k}\LL(\WW_0)$ and $\#\chi_{k,\ell}\leq C_{\rm opt} 2^{\beta k}+{\delta \#\WW_\ell}$ for all $k\in\N$. 
Moreover, assume that there exists $L>0$ such that all $G_{k,\ell}$ satisfy Assumption~\ref{ass:implication}. Then, there holds
\begin{align*}
    \LL(\WW_\ell)\leq
    L^{1/\beta} C_{\rm desc}^{1/\beta} C_{\rm opt}^{1/\beta}  (2\LL(\WW_0)/\mu) C_{\kappa,\delta,\beta} \#\WW_\ell^{-1/\widetilde\beta},
\end{align*}
for a constant $C_{\kappa,\delta,\beta}>0$ that depends only on $\kappa$, $\beta$, and $\delta$ as well as
\begin{align}\label{eq:wbeta}
    \widetilde \beta :=\max\{\beta, \frac{\log(1+LC_{\rm desc}\delta)-1}{|\log(\kappa)|}\}.
\end{align}
Note that sufficiently small $\delta>0$ results in $\widetilde \beta=\beta$ and large $\delta$ results in a reduced rate of convergence $1/\widetilde\beta<1/\beta$.
\end{theorem}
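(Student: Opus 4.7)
The plan is to apply Theorem~\ref{thm:inner} once per outer iteration of Algorithm~\ref{alg:adaptive}, turning the hypothesis on the $G_{k,\ell}$ into a geometric decay of the loss $\epsilon_\ell := \LL(\WW_\ell)$, and then to track the parameter cost through a linear recursion for $\#\WW_\ell$ whose solution, combined with the loss decay, yields the claimed rate. At outer step $\ell$ I instantiate the theorem with $k_\ell := \lceil \log_2(\LL(\WW_0)/(\mu\epsilon_\ell))\rceil$: this choice guarantees $\LL(F_\ell + G_{k_\ell,\ell}) \leq 2^{-k_\ell}\LL(\WW_0) \leq \mu\epsilon_\ell$, so that (using also that Step~(1) of Algorithm~\ref{alg:adaptive} does not increase the loss and that $G_{k_\ell,\ell}$ satisfies Assumption~\ref{ass:implication}) Theorem~\ref{thm:inner} produces $\WW_{\ell+1}$ with $\epsilon_{\ell+1}\leq\kappa\epsilon_\ell$ and
\begin{align*}
\#\WW_{\ell+1}-\#\WW_\ell \leq LC_{\rm desc}\#\chi_{k_\ell,\ell}\leq LC_{\rm desc}\bigl(C_{\rm opt}\,2^{\beta k_\ell}+\delta\,\#\WW_\ell\bigr).
\end{align*}

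Substituting $2^{\beta k_\ell}\leq (2\LL(\WW_0)/(\mu\epsilon_\ell))^\beta$ and abbreviating $q:=1+LC_{\rm desc}\delta$ and $A:=LC_{\rm desc}C_{\rm opt}(2\LL(\WW_0)/\mu)^\beta$ reduces the analysis to the coupled system
\begin{align*}
\epsilon_{\ell+1}\leq\kappa\,\epsilon_\ell,\qquad \#\WW_{\ell+1}\leq q\,\#\WW_\ell + A\,\epsilon_\ell^{-\beta}.
\end{align*}
The next step is to decouple the two by working with the potential $\Phi_\ell := \#\WW_\ell\,\epsilon_\ell^{\widetilde\beta}$, whose recursion becomes
\begin{align*}
\Phi_{\ell+1}\leq q\kappa^{\widetilde\beta}\,\Phi_\ell + A\kappa^{\widetilde\beta}\,\epsilon_\ell^{\widetilde\beta-\beta}.
\end{align*}
This is a contractive affine recursion as soon as $q\kappa^{\widetilde\beta}\leq 1$ and $\widetilde\beta\geq\beta$; taking $\widetilde\beta:=\max\{\beta,\log q/|\log\kappa|\}$ enforces both conditions and matches the exponent in~\eqref{eq:wbeta} up to routine bookkeeping of the normalizing constants.

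The main obstacle will be converting this recursion into a uniform bound $\Phi_\ell\leq C$. When the inequality $q\kappa^{\widetilde\beta}\leq 1$ is strict (the generic case), iteration of the recursion yields a bounded $\Phi_\ell$ immediately. In the boundary case $\widetilde\beta=\log q/|\log\kappa|>\beta$, one has $q\kappa^{\widetilde\beta}=1$, but the forcing term decays: I plan to use the already-available bound $\epsilon_\ell\leq\kappa^\ell \LL(\WW_0)$ inside $A\kappa^{\widetilde\beta}\epsilon_\ell^{\widetilde\beta-\beta}$ to make it a summable geometric sequence and still conclude boundedness. In the other boundary case $\widetilde\beta=\beta$ with $q\kappa^\beta<1$, the forcing is genuinely constant but is absorbed by the strict contraction, and the fixed point of the affine recursion gives the bound directly. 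Once $\Phi_\ell\leq C$ is established, rearrangement yields $\LL(\WW_\ell)\leq C^{1/\widetilde\beta}\#\WW_\ell^{-1/\widetilde\beta}$, and unravelling the constants (observing that $A^{1/\beta}=L^{1/\beta}C_{\rm desc}^{1/\beta}C_{\rm opt}^{1/\beta}\cdot(2\LL(\WW_0)/\mu)$) produces the precise prefactor stated in the theorem.
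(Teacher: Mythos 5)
Your proposal is correct and, at its core, follows the same route as the paper: fix $\ell$, pick $k_\ell$ so that $\LL(F_\ell+G_{k_\ell,\ell})\leq\mu\LL(\WW_\ell)$, feed this into Theorem~\ref{thm:inner} to obtain both the geometric loss decay $\epsilon_{\ell+1}\leq\kappa\epsilon_\ell$ and the affine recursion $\#\WW_{\ell+1}\leq q\#\WW_\ell+A\epsilon_\ell^{-\beta}$ (with the same $q$ and $A$), and then close the argument by choosing $\widetilde\beta$ so that the forcing is summable. The one genuine presentational difference is that you package the final step as a Lyapunov argument for $\Phi_\ell=\#\WW_\ell\,\epsilon_\ell^{\widetilde\beta}$, whereas the paper unrolls the $\#\WW_\ell$-recursion into a sum $\sum_{k<\ell}\LL(\WW_k)^{-\widetilde\beta}(1+LC_{\rm desc}\delta)^{\ell-k}$ and then bounds $\LL(\WW_k)^{-\widetilde\beta}$ by $\kappa^{\widetilde\beta(\ell-k)}\LL(\WW_\ell)^{-\widetilde\beta}$; these are algebraically the same iteration, but the potential-function phrasing is cleaner, avoids the paper's passage from $\LL(\WW_k)^{-\beta}$ to $\LL(\WW_k)^{-\widetilde\beta}$ (which tacitly uses $\LL(\WW_k)\leq1$), and makes your explicit case split at $q\kappa^{\widetilde\beta}=1$ natural — a boundary the paper glosses over with ``choosing $\widetilde\beta$ sufficiently large.'' Note also that the prefactor you obtain from $\Phi_\ell\leq C$ naturally carries the exponent $1/\widetilde\beta$ on $A$ rather than the $1/\beta$ written in the theorem; the two agree only when $\widetilde\beta=\beta$, but since $A\geq1$ in the relevant regime one has $A^{1/\widetilde\beta}\leq A^{1/\beta}$, so the stated bound still holds — this mismatch is present in the paper's own proof as well, so it is not a gap on your side.
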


\begin{proof}[Proof of Theorem~\ref{thm:main}]
{We fix $\ell \in \N$. Since we assumed that
the optimizer does not increase the loss, i.e., $\LL(\WW_{\ell}) \leq \LL(\WW_{0})$
and $\mu \in (0,1)$, we can} choose $k\in\N$ such that $\LL(\WW_\ell)/2< 2^{-k}\LL(\WW_0)/\mu\leq \LL(\WW_\ell)$ and hence
$\LL(F_\ell+G_{k,\ell})\leq  2^{-k} \LL(\WW_0)\leq \mu \LL(\WW_\ell)$.
Application of Theorem~\ref{thm:inner} with $G=G_{k,\ell}$ shows that Step~(2) of Algorithm~\ref{alg:adaptive} produces $\WW_{\ell+1}$ such that 
\begin{align}\label{eq:reduction}
    \LL(\WW_{\ell+1})\leq \kappa \LL(\WW_\ell)
\end{align}
as well as
\begin{align*}
    \#\WW_{\ell+1}-\#\WW_\ell \leq LC_{\rm desc} \#\chi_{k,\ell}\leq LC_{\rm desc}C_{\rm opt} 2^{\beta k}+{LC_{\rm desc}\delta\#\WW_\ell}.
 \end{align*}
By assumption, this implies
 \begin{align*}
   \#\WW_{\ell+1}\leq
    LC_{\rm desc}C_{\rm opt} (2\LL(\WW_0)/\mu)^{{\beta}} \LL(\WW_\ell)^{-{\beta}}+{(1+LC_{\rm desc}\delta)\#\WW_\ell}
\end{align*}
for all $\ell\in\N$. Iterative application of this inequality shows
\begin{align*}
     \#\WW_{\ell}&\leq
      LC_{\rm desc}C_{\rm opt} (2\LL(\WW_0)/\mu)^{{\beta}} \#\WW_0 \sum_{k=0}^{\ell-1} \LL(\WW_k)^{-\beta}(1+LC_{\rm desc}\delta)^{\ell-k}\\
      &\leq
      LC_{\rm desc}C_{\rm opt} (2\LL(\WW_0)/\mu)^{{\beta}} \#\WW_0 \sum_{k=0}^{\ell-1} \LL(\WW_k)^{-\widetilde \beta}(1+LC_{\rm desc}\delta)^{\ell-k}
\end{align*}
for all $\widetilde \beta\geq \beta$.
Finally,~\eqref{eq:reduction} shows $\LL(\WW_k)^{-\widetilde\beta}\leq \kappa^{\widetilde\beta(\ell-k)}\LL(\WW_\ell)^{-\widetilde\beta}$ for all $\ell\geq k$ and hence
\begin{align*}
   \#\WW_{\ell}\leq
      LC_{\rm desc}C_{\rm opt} (2\LL(\WW_0)/\mu)^{{\beta}} \#\WW_0 \LL(\WW_{\ell})^{-\widetilde\beta}\sum_{k=0}^{\ell-1} (\kappa^{\widetilde\beta}(1+LC_{\rm desc}\delta))^{\ell-k}.
\end{align*}
Choosing $\widetilde \beta$ sufficiently large such that $\kappa^{\widetilde\beta}(1+LC_{\rm desc}\delta)<1$, the sum on the right-hand side is bounded uniformly in $\ell$ by a geometric series.
This concludes the proof.
\end{proof}

\begin{corollary}\label{cor:main}
Choose $\kappa$ and $\mu$ as in Theorem~\ref{thm:inner}.
Given $\WW_0\in\W(\chi_0)$ and let $(\WW_\ell)_{\ell\in\N}$ denote the output of Algorithm~\ref{alg:adaptive}. Assume that for all $k\in\N$, there exists $\widetilde G_{k}\in\RR(\chi_{k})$ such that $\LL(\widetilde G_{k})\leq 2^{-k}\LL(\WW_0)$ and $\#\chi_{k}\leq C_{\rm opt} 2^{\beta k}$ for all $k\in\N$. 
Moreover, assume that there exists $L>0$ such that all $\widetilde G_{k}$ and $\widetilde G_k-\RR(\WW_\ell)$ satisfy Assumption~\ref{ass:implication} for all $\ell,k\in\N$. Then, there holds
\begin{align*}
    \LL(\WW_\ell)\leq
    L^{1/\beta} C_{\rm desc}^{1/\beta} C_{\rm opt}^{1/\beta}  (2/\mu) C_{\kappa,\delta,\beta} \#\WW_\ell^{-1/\widetilde\beta},
\end{align*}
where $\widetilde \beta$ is defined in~\eqref{eq:wbeta} with $\delta=1$.
\end{corollary}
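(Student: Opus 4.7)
The plan is to derive Corollary~\ref{cor:main} directly from Theorem~\ref{thm:main} by the reduction
$$G_{k,\ell} := \widetilde G_k - \RR(\WW_\ell) \qquad (k,\ell\in\N).$$
With $F_\ell := \RR(\WW_\ell)$ this gives $F_\ell + G_{k,\ell} = \widetilde G_k$, so the hypothesis $\LL(\widetilde G_k) \leq 2^{-k}\LL(\WW_0)$ of the corollary transcribes verbatim to $\LL(F_\ell + G_{k,\ell}) \leq 2^{-k}\LL(\WW_0)$, which is exactly the loss-reduction condition demanded by Theorem~\ref{thm:main}.

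Next I would verify the architecture bound of Theorem~\ref{thm:main} with $\delta=1$. Since the networks considered are scalar, negating $\RR(\WW_\ell)$ is achieved by flipping the sign of the final-layer weights and bias, yielding $-\RR(\WW_\ell) = \RR(\WW_\ell^-)$ with $\#\WW_\ell^- = \#\WW_\ell$. Applying the $\oplus$ construction from Section~2 (after, if necessary, matching depths by padding the shallower network with layers that reproduce the previous output via a suitable bias shift that makes (Leaky-)ReLU act linearly, at a cost of only lower-order parameters) produces an architecture $\chi_{k,\ell}$ with $G_{k,\ell} \in \RR(\chi_{k,\ell})$ and
$$\#\chi_{k,\ell} \;\lesssim\; \#\chi_k + \#\WW_\ell \;\leq\; C_{\rm opt}2^{\beta k} + \#\WW_\ell,$$
which is the architecture assumption of Theorem~\ref{thm:main} at $\delta=1$. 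Assumption~\ref{ass:implication} for $G_{k,\ell}=\widetilde G_k - \RR(\WW_\ell)$ is listed among the hypotheses of the corollary, so every prerequisite of Theorem~\ref{thm:main} is in place; invoking that theorem with $\delta=1$ then delivers the claimed estimate with $\widetilde\beta$ given by~\eqref{eq:wbeta} at $\delta=1$.

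The only nontrivial bookkeeping point I anticipate is the parameter count in the $\oplus$ construction: strictly speaking $\#(\chi_k \oplus \chi_\ell)$ can exceed $\#\chi_k + \#\chi_\ell$, because the block-off-diagonal entries of the intermediate weight matrices, although zero, are still counted in $\#\chi$; moreover the depths of $\chi_k$ and $\chi_\ell$ need not coincide. Both issues are cosmetic: the off-diagonal surplus is controlled by widths that are already present in $\chi_k$ and $\chi_\ell$ and can be absorbed into a slightly larger effective $\delta$, which does not change $\widetilde\beta$ so long as the summability condition $\kappa^{\widetilde\beta}(1+LC_{\rm desc}\delta)<1$ at the end of the proof of Theorem~\ref{thm:main} is preserved, and the depth mismatch is handled by the padding construction indicated above. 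Beyond this, the corollary is a direct corollary of Theorem~\ref{thm:main}.
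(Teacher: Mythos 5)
Your proof takes exactly the paper's route: define $G_{k,\ell}:=\widetilde G_k-\RR(\WW_\ell)$, observe $\LL(F_\ell+G_{k,\ell})=\LL(\widetilde G_k)\le 2^{-k}\LL(\WW_0)$ together with $\#\chi_{k,\ell}\le C_{\rm opt}2^{\beta k}+\#\WW_\ell$, and apply Theorem~\ref{thm:main} with $\delta=1$. Your aside about the $\oplus$ parameter count is in fact sharper than the paper's proof (which silently asserts the architecture bound); be aware, though, that for networks with more than one hidden layer the off-diagonal surplus in $\#(\chi_k\oplus\chi_\ell)$ would force $\delta>1$ and thereby strictly increase $\widetilde\beta$ via~\eqref{eq:wbeta}, so calling it ``cosmetic'' is a touch optimistic -- the corollary as stated with $\delta=1$ is exact only in the one-hidden-layer case, where $\#(\chi_k\oplus\chi_\ell)\le\#\chi_k+\#\chi_\ell$ holds because the input and output layers are shared.
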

\begin{proof}
    We define $G_{k,\ell}:=\widetilde G_k - \RR(\WW_\ell)\in \RR(\chi_{k,\ell}$  and note $\#\chi_{k,\ell}\leq C_{\rm opt}2^{\beta k}+\#\WW_\ell$. 
    There holds $\LL(F_\ell + G_{k,\ell})\leq = \LL(G_k)\leq  2^{-k}\LL(\WW_0)$. Thus, we may apply Theorem~\ref{thm:main} with $\delta=1$ and obtain the result.
\end{proof}

\begin{remark}
    Note that the assumption on the existence of networks $G_{k,\ell}$ in Theorem~\ref{thm:main} statement above is quite strong for small $\delta$ and might be violated in practical examples. While many approximation results for neural networks show similar convergence rates (sometimes even exponential), the problem is that the size of $G_{k,\ell}$ only depends very weakly on $F_\ell$ for small $\delta$. Implicitly, this means that $F_\ell$ does not destroy the approximation. If that would be the case, $G_{k,\ell}$ could have to be at least as big as $F_\ell$ in order to undo the approximation errors of $F_\ell$ before it can improve the approximation. Corollary~\ref{cor:main} does not need that assumption, but might only prove a reduced rate of convergence.
\end{remark}

\subsection{A computable bound on the optimality of the loss}
Given an architecture $\chi$, there exists a (possibly non-unique) set of weights $\WW\in\W(\chi)$ that minimizes the loss $\LL(\RR(\WW))$. In practice, this optimum is unknown and only asymptotic estimates can be applied. 
In general, it is very hard to decide whether the optimization algorithm is stuck in a non-optimal stationary point, or if the network architecture is just too small to give a good approximation.

Under the previous assumption that we can solve the optimization problem~\eqref{eq:Wstar} (almost) exactly (see Remark~\ref{rem:Wstar}), we can
indeed compute a threshold value that partially answers the question above.

\begin{theorem}\label{thm:cea}
We fix an architecture $\chi^\star$ and compute $\WW^\star$ from~\eqref{eq:Wstar} to define the computable constant
\begin{align*}
    C_{\rm opt}:=2(\by-\bH\bF)\cdot \bH\bRR(\WW^\star)/\sqrt{\LL(F)}.
\end{align*}
Then, there holds 
\begin{align*}
     \LL(F)\leq  2\LL(F + G)
\end{align*}
for all architectures $\chi$ and networks $G\in\RR(\chi)$ that satisfy Assumption~\ref{ass:implication} with $\#\chi/\#\chi^\star\leq \frac{1}{{16}L^2 C_{\rm opt}^2}$.
\end{theorem}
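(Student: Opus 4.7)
The plan is to argue by contraposition. We assume toward contradiction that there exist an architecture $\chi$ and $G\in\RR(\chi)$ satisfying Assumption~\ref{ass:implication} and $\#\chi/\#\chi^\star\le 1/(16L^2 C_{\rm opt}^2)$, yet $\LL(F+G)<\LL(F)/2$, and propagate this through the existing lemmas to force $\#\chi/\#\chi^\star > 1/(16L^2 C_{\rm opt}^2)$, contradicting the hypothesis. No new geometric input is needed; the argument is a short chain of Lemma~\ref{lem:descentexists}, Assumption~\ref{ass:implication}, and the definition of $C_{\rm opt}$.

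First, since $\LL(F+G)<\LL(F)$, Lemma~\ref{lem:descentexists} applies and yields
\[
2(\by-\bH\bF)\cdot\frac{\bH\bG}{|\bH\bG|}\ge \frac{|\LL(F+G)-\LL(F)|}{2\sqrt{\LL(F)}}.
\]
Using the contrapositive assumption $\LL(F)-\LL(F+G)>\LL(F)/2$ together with $|\by-\bH\bF|=\sqrt{\LL(F)}$ and dividing by $\sqrt{\LL(F)}$ one obtains the normalized lower bound
\[
\gamma:=\frac{(\by-\bH\bF)\cdot\bH\bG}{|\by-\bH\bF|\,|\bH\bG|}>\frac{1}{8}.
\]
Since $G$ satisfies Assumption~\ref{ass:implication}, this transfers to the optimizer $\WW^\star$ of \eqref{eq:Wstar}:
\[
\frac{(\by-\bH\bF)\cdot\bH\bRR(\WW^\star)}{|\by-\bH\bF|\,|\bH\bRR(\WW^\star)|}\ge\frac{\gamma}{L\sqrt{\#\chi/\#\chi^\star}}>\frac{1}{8L\sqrt{\#\chi/\#\chi^\star}}.
\]

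Next I would close the loop via the homogeneity \eqref{eq:Rscaling}: the constraint $|\bH\bRR(\WW)|\le 1$ in \eqref{eq:Wstar} is saturated at the optimum, since otherwise rescaling $\WW^\star$ by $1/|\bH\bRR(\WW^\star)|$ through \eqref{eq:scaling} would remain feasible and strictly increase the objective. Thus $|\bH\bRR(\WW^\star)|=1$, and plugging back in and unfolding the definition of $C_{\rm opt}$ gives
\[
C_{\rm opt}=\frac{2(\by-\bH\bF)\cdot\bH\bRR(\WW^\star)}{\sqrt{\LL(F)}}>\frac{1}{4L\sqrt{\#\chi/\#\chi^\star}},
\]
which rearranges to $\#\chi/\#\chi^\star>1/(16L^2C_{\rm opt}^2)$, the desired contradiction.

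The only non-routine point is the saturation $|\bH\bRR(\WW^\star)|=1$, and that reduces to the one-line homogeneity argument just described; alternatively, one may recast \eqref{eq:Wstar} in the scale-invariant form $\sup\{(\by-\bH\bF)\cdot\bH\bRR(\WW)/|\bH\bRR(\WW)| : \bH\bRR(\WW)\ne 0\}$ and bypass the boundary question entirely. Strict versus non-strict inequalities cause no trouble because the contrapositive assumption $\LL(F+G)<\LL(F)/2$ is strict and this strictness is preserved through every subsequent manipulation.
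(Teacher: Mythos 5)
Your proof is correct and follows essentially the same chain as the paper's own argument (Lemma~\ref{lem:descentexists} $\to$ Assumption~\ref{ass:implication} $\to$ definition of $C_{\rm opt}$), merely phrased as a contraposition rather than tracking an abstract reduction factor $\kappa$. You even make explicit the saturation $|\bH\bRR(\WW^\star)|=1$, which the paper's proof leaves implicit when it passes from the normalized inner-product bound of Assumption~\ref{ass:implication} to the unnormalized one used to bound $C_{\rm opt}$.
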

\begin{remark}
    The statement can be used in the following way: If $C_{\rm opt}$ is small, then even a large extension of the network (quantified by $w^\star$) at hand with some optimal network $G$ can not improve the loss by more than a factor of two. In that case, it might be wiser to train the existing network without increasing its size. If $C_{\rm opt}$ is large, a moderate extension can already improve the loss and hence extension of the network might pay off. One can view Lemma~\ref{thm:cea} as an a~posteriori version of the classical C\'ea lemma for Galerkin methods. Indeed, it quantifies the ratio of current approximation quality to the best possible approximation in a manifold of a given complexity.
\end{remark}
\begin{proof}[Proof of Lemma~\ref{thm:cea}]
Assume there exists $G$ that satisfies Assumption~\ref{ass:implication} such that $\LL(F+G)= \kappa \LL(F)$ for some $0<\kappa<1$. Lemma~\ref{lem:descentexists} shows
 \begin{align*}
 2(\by-\bH\bF)\cdot \frac{\bH\bG}{|\bH\bG|}\geq \frac{|\LL(F)-\LL(F+G)|}{2\sqrt{\LL(F)}}\geq \frac{1-\kappa}{2}\sqrt{\LL(F)}.
 \end{align*}
 Thus, Lemma~\ref{lem:algoptdir} shows that the maximizer $\WW^\star$ satisfies
 \begin{align*}
2(\by-\bH\bF)\cdot \bH\bRR(\WW^\star)&\geq \frac{1-\kappa}{{2}L\sqrt{\#\chi/\#\chi^\star}}\sqrt{\LL(F)}.
 \end{align*}
 Assuming that we compute the optimal constant $C_{\rm opt}$, we have
 \begin{align*}
     C_{\rm opt}\geq  \frac{1-\kappa}{{2}L\sqrt{\#\chi/\#\chi^\star}} 
 \end{align*}
 and hence $\kappa\geq 1-{2}C_{\rm opt}L\sqrt{\#\chi/\#\chi^\star}$. This concludes that
 \begin{align*}
    \LL(F)\leq  (1-{2}C_{\rm opt}L\sqrt{\#\chi/\#\chi^\star})^{-1}\LL(F+G)
 \end{align*}
and proves the assertion under the assumption on $\#\chi/\#\chi^\star$.
\end{proof}
\section{Hierarchical training of deeper networks}
Solving~\eqref{eq:Wstar} exactly becomes unfeasible if $\chi^\star$ is too large, e.g., as is usually the case for deeper networks. 
There are several ways out of this problem.
\subsection{Partial training of the final layers}\label{sec:partial}
Given an architecture $\chi=(d,w_0,\ldots,w_{d+1})$ and $\WW_0\in\W(\chi)$, we may apply Algorithm~\ref{alg:inner} to the final layers only. To that end, given $d'<d$, we define
\begin{align*}
    \WW_0':=(\bW_{d-d'},\bB_{d-d'},\ldots,\bW_d,\bB_d)
\end{align*}
and apply Algorithm~\ref{alg:inner} with $\chi^\star=(d',w_{d-d'}^\star,\ldots,w_{d+1}^\star)$ to $\WW_0'$. The optimization problem~\eqref{eq:Wstar} is solved on updated data $x_i':= \RR\big(x_i,(\bW_0,\bB_0,\ldots,\bW_{d-d'-1},\bB_{d-d'-1})\big)$ for $i=1,\ldots,n$, which is just the original data mapped through the first $d-d'-1$ layers of the network.

This allows as to efficiently extend and optimize the final layers of a deeper network, while keeping the first layers unchanged.

\subsection{Partial training of the first layers}
Given an architecture $\chi=(d,w_0,\ldots,w_{d+1})$ and $\WW_0\in\W(\chi)$, we may apply Algorithm~\ref{alg:inner} to the first layers only. To that end, given $d'<d$, we define
\begin{align*}
    \WW_0'':=(\bW_{0},\bB_{0},\ldots,\bW_{d'},\bB_{d'})
    {
    \quad \text{and} \quad 
    \WW_0':=(\bW_{d'+1},\bB_{d'+1},\ldots,\bW_{d},\bB_{d}).
    }
\end{align*}
and note that
\begin{align*}
    \RR(\phi(\RR(x,\WW_0'')),\WW_0') = \RR(x, \WW_0)\quad\text{for all }x\in\R^{w_0}.
\end{align*}
{In order to train the first layers only, we linearize the loss functional with respect to $\WW_0''$, i.e.,
\begin{align*}
    \LL(\RR(\phi(\RR(\WW)),\WW_0'))\approx  \LL(\RR(\WW_0)) + 2 (\by-\bH\bRR(\WW_0))(\widetilde\bH\bRR(\WW)-\widetilde\bH\bRR(\WW_0'')),
\end{align*}
for any $\WW\in\W(d',w_0,\ldots w_{d'})$ and where $(\widetilde H f)(x):=\big(H\nabla\RR(\phi(\RR(x,\WW_0''),\WW_0')[f]\big)(x)$ for any function $f\colon \R^{w_0}\to \R^{w_{d''+1}}$. The right-hand side can be used to define a meaningful loss functional for training the first $d'$ layers, i.e.
\begin{align*}
    \LL(\RR(\WW))&:= \Big||\by-\bH\bRR(\WW_0)|^2 + 2 (\by-(\widetilde\bH\bRR(\WW)-\widetilde\bH\bRR(\WW_0''))]\Big|^2 \\
    &= |\widetilde \by - \widetilde \bH\bRR(\WW)|^2
\end{align*}
for modified data points given by
\begin{align*}
    \widetilde y_i:= |y_i-H\RR(\WW_0)_i|^2 - 2 (y_i-H\RR(\WW_0)_i)(\widetilde H\RR(\WW_0''))(x_i).
\end{align*}}

\subsection{}
\bigskip
\begin{remark}
We stress that with the techniques discussed in the previous two subsections, we can train and extend specific layers of arbitrarily deep networks, while still guaranteeing that $\chi^\star$ in~\eqref{eq:Wstar} is only a small one-layer network that can be optimized efficiently.
\end{remark}

\section{Generalization error}

In this section, we assume that $\LL(F)=\frac{1}{n}\sum_{j=1}^n|y_i-F(x_i)|^2$ for simplicity and let $H={\rm id}$.
We assume a data space $\XX\subseteq {\R}^{w_0}$ with a given probability measure $\mu$ on $\XX$ that encodes the data density and a response function $y\colon \XX\to\R$. For a network $F=\RR(\WW)\in\W(\chi)$ trained on a set of data points $\bx=(x_1,\ldots,x_n)\in \XX^n$ and $y_i:=y(x_i)$, $i=1,\ldots,n$ , the generalization error is usually defined by
\begin{align*}
    \GG(F)=\GG(\WW):=\int_{\XX} |F(x)-y(x)|^2\,d\mu(x).
\end{align*}
One is usually interested in the ratio of the (empirical) loss $\LL$ and the (unknown) generalization error $\GG(F)$, i.e., one wants to bound
\begin{align*}
    \GG(F)/\LL(F).
\end{align*}
The usual strategy to 
bound the generalization error $\GG$ is to exploit some topological properties of the training data, e.g., covering numbers or other properties of randomly chosen training data (see, e.g,~\cite{bgj_generalization, deepO_gen, deeplearning, found} for examples).
These approaches usually aim to bound $\GG$ by using some sort of Lipschitz continuity of the network $F$ in a suitable norm $\norm{\cdot}{}$ and the fact that the training data is well-distributed in $\XX$, i.e, for any point $x\in\XX$, there is training data $x_i$ with $\norm{x-x_i}{}$ sufficiently small. Often, this assumption is made in a probabilistic sense, where one assumes a certain distribution of randomly chosen $x_i$. {This is often done when the Rademacher complexity of a hypotheses set is used to bound the generalization error, see, e.g.,~\cite{adanet,deepboost} or~\cite{MLbook} for an overview. Here, the data is i.i.d. distributed and one usually gets a factor $n^{-1/2}$ in the generalization error (akin to a Monte Carlo quadrature error).}
In the present work, however, we will stick with deterministic data. We show that the hierarchical training implies improved generalization error.

\begin{lemma}\label{lem:generr}
    Under the assumptions of Theorem~\ref{thm:main} assume additionally that the produced weights in $\WW_\ell$ have uniformly bounded magnitude and that the response function $y$ is Lipschitz continuous.
    If $\LL(\RR(\WW_\ell))\simeq \eps$ for some $\eps>0$ and {and all sets} $\XX_i:=\set{x\in\XX}{\norm{x-x_i}{}\leq \eps^{\frac12+\frac{d}{2\widetilde\beta}}}$ satisfy $\mu(\XX_i)\simeq 1/n$ {as well as $\bigcup_{i=1}^n\XX_i=\XX$,} then also
    \begin{align*}
        \GG(\RR(\WW_\ell))\leq C\eps.
    \end{align*}
\end{lemma}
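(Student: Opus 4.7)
The plan is a covering-type argument. Using $\bigcup_i\XX_i=\XX$ and countable subadditivity, I would first bound
\[
    \GG(\RR(\WW_\ell))\leq \sum_{i=1}^n\int_{\XX_i}|F(x)-y(x)|^2\,d\mu(x),
\]
with $F:=\RR(\WW_\ell)$. On each $\XX_i$, I insert the training point $x_i$ via the triangle inequality and use $(a+b+c)^2\leq 3(a^2+b^2+c^2)$ to split the integrand into $|F(x)-F(x_i)|^2$, the pointwise training error $|F(x_i)-y_i|^2$, and $|y(x_i)-y(x)|^2$.

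The middle term, weighted by $\mu(\XX_i)\simeq 1/n$ and summed over $i$, is exactly $3\LL(F)\simeq \eps$. The third term is handled by the Lipschitz assumption on $y$: the defining bound $\|x-x_i\|\leq \eps^{1/2+d/(2\widetilde\beta)}$ on $\XX_i$ gives, after squaring, a factor $\eps^{1+d/\widetilde\beta}$. For the first term I need a Lipschitz bound on the realization $F$: the assumption that the weights in $\WW_\ell$ have uniformly bounded magnitude yields a bound on $\|\bW_i\|_{\rm op}$ (via a Frobenius-type estimate) and hence a Lipschitz constant $L_F$ for $F$, which however a priori grows with the widths of $\WW_\ell$. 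Using $\#\WW_\ell\simeq \eps^{-\widetilde\beta}$ from Theorem~\ref{thm:main} at the stopping scale, the target estimate is $L_F^2\lesssim \eps^{-d/\widetilde\beta}$, and the exponent $1/2+d/(2\widetilde\beta)$ in the defining radius of $\XX_i$ is precisely calibrated so that $L_F^2\,\eps^{1+d/\widetilde\beta}\lesssim\eps$.

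Summing the three contributions then produces
\[
\GG(F)\lesssim (L_F^2+L_y^2)\,\eps^{1+d/\widetilde\beta}+\LL(F)\lesssim \eps,
\]
which is the desired statement.

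The main obstacle will be making the Lipschitz bound on $F$ honest. The naive estimate $L_F\leq \prod_i\|\bW_i\|_{\rm op}$ can scale poorly with the layer widths unless one is careful about what ``bounded magnitude'' of the weights precisely means (entrywise versus operator norm, fixed versus growing depth). The proof has to either invoke a structural property of the iterates produced by Algorithm~\ref{alg:adaptive} (bounded depth, bounded operator norms per layer) or exploit the homogeneity of the (Leaky-)ReLU realization to justify the architecture-aware bound $L_F^2\lesssim \eps^{-d/\widetilde\beta}$ that the chosen radius is calibrated against; everything else in the argument is routine triangle-inequality bookkeeping.
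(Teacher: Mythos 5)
Your proposal follows essentially the same route as the paper's proof: cover $\XX$ by the $\XX_i$, split the integrand by the triangle inequality into a Lipschitz term for $F$, the training loss, and a Lipschitz term for $y$, then use the paper's claimed bound $C_L\lesssim\#\WW_\ell^{d/2}$ together with $\#\WW_\ell\lesssim\eps^{-1/\widetilde\beta}$ from Theorem~\ref{thm:main} to see that the radius exponent $\tfrac12+\tfrac{d}{2\widetilde\beta}$ absorbs the Lipschitz growth. Your closing caveat about how ``bounded magnitude'' of the weights yields the width-dependent Lipschitz constant is a fair one---the paper simply asserts it as a ``straightforward calculation''---but it does not represent a departure from the paper's argument.
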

\begin{remark}
    Note that for a general training algorithm and without making additional assumptions on the Lipschitz continuity of the trained network, we can not expect a result similar to that of Lemma~\ref{lem:generr} with a constant $C$ that is independent of $\#\WW_\ell$. This follows from the fact that, even with weights that have uniformly bounded magnitude, the Lipschitz constant of a network can grow with $\#\WW_\ell$. (Note that there is strong empirical evidence that the Lipschitz constants of large networks remain bounded or grow very slowly during training)
\end{remark}
\begin{proof}[Proof of Lemma~\ref{lem:generr}]
    Assume that the weights $\bW_i$ and biases $\bB_i$ in $\WW_\ell$ have magnitude bounded by some $C>0$.  A straightforward calculation shows that $\RR(\WW_\ell)$ is Lipschitz continuous, with a Lipschitz constant $C_L\lesssim \#\WW_\ell^{d/2}$ (for fixed $C$). 
    With the Lipschitz constant $C_L^y>0$ of $y$, there holds for all $x\in\XX_i$
    \begin{align*}
        |\RR({x,\WW_\ell}) - \RR({x_i,\WW_\ell})|+|y(x)-y(x_i)|\leq(C_L^y+C_L)\eps^{1/2+d/(2\widetilde \beta)}
    \end{align*}
    This implies
    \begin{align*}
        \GG(\RR(\WW_\ell))&\lesssim\sum_{i=1}^n \mu(\XX_i)\sup_{x\in\XX_i}\Big( |\RR({x,\WW_\ell}) - \RR({x_i,\WW_\ell})|^2+|y(x)-y(x_i)|^2\Big) +\LL({\mathcal{R}(\WW_\ell)})     \\
        &\lesssim (1+\#\WW_\ell^{d}) \eps^{1+d/\widetilde\beta} +\eps.
    \end{align*}
    Theorem~\ref{thm:main} shows $\#\WW_\ell\lesssim \LL(\WW_\ell)^{-1/\widetilde\beta}\simeq \eps^{-1/\widetilde\beta}$ and hence
    \begin{align*}
    \GG(\RR(\WW_\ell))\lesssim \eps.
    \end{align*}
    This concludes the proof.
\end{proof}
\subsection{Optimal generalization}
    The proof strategies to bound the generalization error found in the literature (see, e.g.,~\cite{bgj_generalization, deepO_gen}) and employed in the previous section are somewhat unsatisfying as the assumption that the training data is close (in some norm $\norm{\cdot}{}$) to any possible input $x\in\XX$ seems not to hold in practical examples of machine learning. For example, consider a \emph{Large Language Model} $F$ (see, e.g.,~\cite{llm}) that is trained to predict the following sentence of an input text $x$. Experiment shows~\cite{gpt4} that such models often learn other tasks, such as translation, as a byproduct of their training. Thus, the model will likely be able to correctly predict the ending of a translated text $x\in\XX$, even though only the original version of the text $x_i$ is contained in the training data.
In this case, it can not be expected that $\norm{x-x_i}{}$ is small in any norm in which the network $F$ is continuous, as in this case, the norm $\norm{\cdot}{}$ and thus generalization error $\GG$ would not capture if the response $F(x)$ is provided in the language of the input $x$. Moreover, it can not be assumed that the training data is i.i.d in the set of all input data. 

\medskip

To overcome this drawback, we define in Definition~\ref{def:optgen} below the notion of \emph{optimal generalization}. We define a property of the training data that is strictly necessary to be able to achieve small generalization error $\GG$ for general responses $y$. Then, Algorithms with \emph{optimal generalization} that are trained on such training data will also have small generalization error.

In order to make this rigorous, we need to consider a class of training algorithms: We assume a training algorithm $\A$ that takes a given set of training data $x_1,\ldots,x_n$ and the corresponding responses $y_1,\ldots,y_n$ and outputs $(\chi,F):=\A(\bx,\by,\eps)$ with $F\in\W(\chi)$ and $\LL(F)\leq \eps$. We assume that $\A$ is stable in the sense that for different responses $\widetilde y_1,\ldots,\widetilde y_n$ the output of $\A$ satisfies
 \begin{align}\label{eq:stabletraining}
        \int_{\XX} |F(x)-\widetilde F(x)|^2\,d\mu(x)\leq C_{\rm stab}|\by-\widetilde \by|
    \end{align}
for $(\widetilde \chi,\widetilde F)=\A(\bx,\widetilde \by,\eps)$ and a universal constant $C_{\rm stab}>0$.

With this, we define the notion of \emph{optimal generalization} as follows:
We fix a training algorithm $\A$ and consider all possible architectures $\chi$ that can be the output of $\A$.
We call those architectures admissible.
Given $\eps>0$, we denote by $\#\chi(\eps)\in\N$ the smallest number of parameters such that there exists an admissible architecture $
\chi$ with $\#\chi\leq \#\chi(\eps)$ and
\begin{align*}
   \inf_{F\in\W(\chi)} \GG(F)\leq \eps.
\end{align*}
We consider a nested sequence of training data sets $(\bx_k)_{k\in\N}$, i.e., $\bx_k=(x_1,\ldots,x_{n_k})$ for numbers $n_k<n_{k+1}\in\N$.
\begin{definition}\label{def:epsrich}
For fixed responses $y$, we call the sequence $(\bx_k)_{k\in\N}$ of training data $\eps$-rich if for all $C_1>0$ there exists $C_2>0$ such that for all $\eps>0$ and all admissible architectures $\chi$ with $\#\chi\leq C_1\#\chi(\eps) $ the following implication is true for almost all $k\in\N$
\begin{align*}
    F\in\RR(\chi)\text{ with } \LL(F,\bx_k)\leq \eps\quad\implies \quad  \GG(F)\leq C_2\eps,
\end{align*}
where $\LL(F,\bx_k):=\LL(F)=|(y(x_i))_{i=1}^{n_k}-(F(x_i))_{i=1}^{n_k}|^2$ is the loss function defined in~\eqref{eq:loss}. 
\end{definition}
One interpretation of $\eps$-richness of training data is the following: If the smallest architecture required to reduce the (empirical) loss $\LL$ below $\eps$ is of comparable size to the smallest architecture to reduce the generalization error $\GG(\cdot)$ below $\eps$, the empirical loss is a good predictor for the generalization error. We argue that this notion is a necessary requirement for the data in order to meaningfully talk about generalization error, as shown in the following lemma. 
Note that Lemma~\ref{lem:generr} shows that if the sequence of training data is sufficiently well distributed, this implies that the training data is $\eps$-rich (we don't even require the assumption $\#\chi\leq C_1\#\chi(\eps)$ since we use very strong assumptions on the distribution of the data).

\begin{lemma}\label{lem:epsnec}
    Assume a stable training algorithm $\A$ and a sequence of training data $(\bx_k)_{k\in\N}$ that is not $\eps$-rich. Then, for all $C>0$, there exists  $\eps>0$, a response function $\widetilde y$, a set of training data $\bx_k=(x_1,\ldots,x_n)$, $\widetilde \by=(\widetilde y_1,\ldots,\widetilde y_n)$ with $\widetilde y_i:=\widetilde y(x_i)$ such that $(\chi,F)=\A(\bx_k,\widetilde \by,\eps)$ satisfies 
    \begin{align*}
        \widetilde \GG(F):=\int_\XX |F(x)-\widetilde y(x)|^2\,d\mu(x) \geq C \widetilde \LL(F):= C\frac1n\sum_{i=1}^n|\widetilde y_i-F(x_i)|^2.
    \end{align*}
    The response $\widetilde y$ is either equal to $y$ or can be exactly represented by a neural network of a similar size as $\chi$.
\end{lemma}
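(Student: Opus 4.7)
The plan is to argue adversarially against the algorithm $\A$ by exploiting the failure of $\eps$-richness. First I will negate Definition~\ref{def:epsrich} to obtain a constant $C_1>0$ such that for every $C_2>0$ there exist $\eps>0$, an admissible architecture $\chi'$ with $\#\chi'\le C_1\#\chi(\eps)$, and a function $F\in\RR(\chi')$ for which infinitely many indices $k$ satisfy $\LL(F,\bx_k)\le\eps$ yet $\GG(F)>C_2\eps$. Given the target $C>0$ in the claim, I pick $C_2$ sufficiently large compared to $C$ and $C_{\rm stab}$, fix one such $k$, and split the argument according to the behaviour of $\A$ on the original response. Let $(\chi_0,F_0):=\A(\bx_k,\by,\eps)$; if already $\GG(F_0)\ge C\LL(F_0)$, the lemma holds with $\widetilde y:=y$ and $(\chi,F):=(\chi_0,F_0)$.

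Otherwise $\|F_0-y\|_{L^2(\mu)}^2=\GG(F_0)<C\LL(F_0)\le C\eps$: the algorithm performs well on $(\bx_k,\by)$, so I need to manufacture a confounding response. I set $\widetilde y:=F$, so that $\widetilde\by=\bF$ and $|\by-\widetilde\by|^2=\LL(F,\by)\le\eps$, and let $(\chi,\widetilde F):=\A(\bx_k,\widetilde\by,\eps)$ with $\widetilde\LL(\widetilde F)\le\eps$. Stability~\eqref{eq:stabletraining} gives $\|F_0-\widetilde F\|_{L^2(\mu)}^2\le C_{\rm stab}|\by-\widetilde\by|$, while the non-richness bound reads $\|F-y\|_{L^2(\mu)}^2=\GG(F)>C_2\eps$. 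The reverse triangle inequality
\begin{align*}
\|\widetilde F-F\|_{L^2(\mu)}\ \ge\ \|F-y\|_{L^2(\mu)}-\|y-F_0\|_{L^2(\mu)}-\|F_0-\widetilde F\|_{L^2(\mu)}
\end{align*}
together with an appropriately large choice of $C_2$ produces $\widetilde\GG(\widetilde F)=\|\widetilde F-F\|_{L^2(\mu)}^2\ge C\eps\ge C\widetilde\LL(\widetilde F)$. Since $\widetilde y=F\in\RR(\chi')$ with $\#\chi'\le C_1\#\chi(\eps)$, and any output of $\A$ attaining loss $\le\eps$ must itself have at least of order $\#\chi(\eps)$ parameters, the response $\widetilde y$ is realised by a network of size comparable to $\chi$, as required.

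The delicate point is the quantitative balance in the second case. The right-hand side of~\eqref{eq:stabletraining} carries the unsquared norm $|\by-\widetilde\by|\lesssim\sqrt\eps$, so $\|F_0-\widetilde F\|_{L^2(\mu)}$ scales only like $\eps^{1/4}$, whereas the non-richness gap is of order $\sqrt{C_2\eps}$. Dominating the $\eps^{1/4}$ perturbation with the $\sqrt{C_2\eps}$ gap amounts to $(\sqrt{C_2}-2\sqrt{C})\,\eps^{1/4}\gtrsim\sqrt{C_{\rm stab}}$, which is achievable since we are free to enlarge $C_2$ in the negated statement of $\eps$-richness after $C$ is fixed. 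Matching $C_2$ consistently against the witness $\eps$ supplied by non-richness is the main bookkeeping issue, and is where the precise interpretation of~\eqref{eq:stabletraining} and the freedom in the non-richness negation must be coordinated.
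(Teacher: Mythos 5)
Your argument follows the same route as the paper's: negate $\eps$-richness to extract a network $F$ (the paper calls it $G$) with small empirical loss but generalization error exceeding $C_2\eps$, use $F$ itself as the adversarial response $\widetilde y$, and then play the stability assumption against a triangle inequality in $L^2(\mu)$ to force a large generalization gap. Your explicit case split (``if $\GG(F_0)\geq C\LL(F_0)$ already, take $\widetilde y=y$; else manufacture a bad response'') is the unfolded version of the paper's final estimate $2\max\{\widetilde\GG(\widetilde F),\GG(F)\}\gtrsim C\eps$, so there is no genuine difference in structure.

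You have, however, correctly spotted the place where the argument is delicate, and your proposed fix is the one step that does not close. The stability assumption~\eqref{eq:stabletraining} as written has $|\by-\widetilde\by|$ unsquared on the right, so after taking square roots you get $\|F_0-\widetilde F\|_{L^2(\mu)}\leq\sqrt{C_{\rm stab}}\,|\by-\widetilde\by|^{1/2}\leq\sqrt{C_{\rm stab}}\,\eps^{1/4}$, while the useful gap scales as $\sqrt{C_2\eps}$. You propose to absorb the $\eps^{1/4}$ term by choosing $C_2$ large, but in the negation of Definition~\ref{def:epsrich} the quantity $\eps$ is \emph{supplied after} $C_2$ is chosen, and nothing prevents the adversary from giving you an $\eps$ so small that $(\sqrt{C_2}-2\sqrt{C})\,\eps^{1/4}\to 0$; hence the inequality $(\sqrt{C_2}-2\sqrt{C})\,\eps^{1/4}\gtrsim\sqrt{C_{\rm stab}}$ cannot be guaranteed by enlarging $C_2$. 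The paper's own chain silently replaces $\sqrt{C_{\rm stab}|\by-\widetilde\by|}$ with $\sqrt{C_{\rm stab}}\,|\by-\widetilde\by|$, i.e.\ it implicitly reads the stability bound as $\int_\XX|F-\widetilde F|^2\,d\mu\leq C_{\rm stab}|\by-\widetilde\by|^2$; with that (dimensionally natural) reading one obtains $\|F_0-\widetilde F\|_{L^2(\mu)}\leq\sqrt{C_{\rm stab}\eps}$ and the lower bound $\sqrt{C_2\eps}-\sqrt{C\eps}-\sqrt{C_{\rm stab}\eps}=(\sqrt{C_2}-\sqrt{C}-\sqrt{C_{\rm stab}})\sqrt\eps$ is positive and of the right order uniformly in $\eps$, so the $C_2$-bookkeeping becomes trivial. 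In short: you identified the real difficulty, but the resolution is to correct the exponent in~\eqref{eq:stabletraining} rather than to tune $C_2$, which as you set it up cannot work. One further small caution: your closing remark that the output of $\A$ must have at least $\sim\#\chi(\eps)$ parameters is not immediate, since $\#\chi(\eps)$ is defined via $\GG$ whereas the algorithm only guarantees $\LL\leq\eps$; the paper does not justify the ``similar size'' claim either, so this is a shared weak point rather than a flaw specific to your write-up.
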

\begin{remark}
    The result can be interpreted in the following way: If the training set is not $\eps$-rich, a training algorithm that is stable can not have small generalization error, even if we assume that the response function $y$ is exactly represented by a neural network of similar size as the trained network. The fact that $y$ is represented by a network of similar size is important since one always can construct a function $y$ that results in zero loss but arbitrary large $\GG$. If such a function $y$ can be represented by a network of similar size, however, then one can not expect small generalization error without posing additional restrictions or assumption on the architecture of the network. We also refer to~\cite{t2p-gap} for results that show that, for a similar reason, the generalization error can not be expected to be small if one considers $L^\infty$-norms.
\end{remark}
\begin{proof}[Proof of Lemma~\ref{lem:epsnec}]
    Let $(\bx_k)_{k\in\N}$ not be $\eps$-rich. Then, by definition, there exists $C_1>0$ such that for all $C_2\gg C_{\rm stab}$ there exists $\eps>0$ and an architecture $\chi$ with $G\in\WW(\chi)$ such that $\#\chi\leq C_1\#\chi(\eps)$, $\LL(G)\leq \eps$, and $\GG(G)> C_2\eps$.
    We construct a second training problem with the same data $\bx_k$ but different response function $\widetilde y(x):= G(x)$. The training algorithm
    applied to this modified problem will only see a slight $\eps$ change in responses $\widetilde y$ and thus, by assumption~\eqref{eq:stabletraining}, produce a similar training result $\widetilde F\in\W(\widetilde \chi)$.
    This, however, implies that the generalization error $\widetilde \GG$ with respect to the modified problem satisfies
    \begin{align*}
        \sqrt{\widetilde \GG(\widetilde F)}+ \sqrt{\GG( F)}&\geq 
         \sqrt{\int_{\XX} |\widetilde F(x) - G(x) - (F-y(x))|^2\,d\mu}
         \\
         &\geq 
         \sqrt{\int_{\XX} |G(x)-y(x)|^2\,d\mu}
         -\sqrt{\int_{\XX} |\widetilde F(x) -F|^2\,d\mu}\\
         &\geq 
 \sqrt{\int_{\XX} |G(x)-y(x)|^2\,d\mu}
         -\sqrt{C_{\rm stab}}|\bG -\by| \geq \sqrt{C_2\eps}-\sqrt{C_{\rm stab}\eps},
    \end{align*}
    since $\LL(G)=|\bG -\by|^2$.
    Thus, $2\max\{\widetilde \GG(\widetilde F),\GG(F)\}\gtrsim C\eps\geq C\max\{\widetilde \LL(\widetilde F),\LL(F)\} $. This concludes the proof.
\end{proof}

\begin{definition}\label{def:optgen}
    We say that a training algorithm $\A$ has optimal generalization if for any sequence of $\eps$-rich training data $(\bx_k)_{k\in\N}$, there exists a constant $C>0$ such that for all $\eps>0$, the output $(\chi,F)$ of $\A(\bx_n,\by,\eps)$ satisfies
    \begin{align*}
        \GG(F)\leq C\eps.
    \end{align*}
\end{definition}

\begin{theorem}
   
    Assume that $\#\chi(\eps)\geq C \eps^{-\widetilde\beta}$ for all $\eps>0$ and constants $C,\widetilde\beta>0$. If Theorem~\ref{thm:main} holds with the same $\widetilde\beta$, then Algorithm~\ref{alg:adaptive} has optimal generalization.
\end{theorem}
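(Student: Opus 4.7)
The plan is to combine the complexity bound of Theorem~\ref{thm:main} with the lower bound on $\#\chi(\eps)$ to verify the hypothesis of $\eps$-richness, and then read off optimal generalization directly from Definition~\ref{def:epsrich}.

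First I would fix an arbitrary $\eps$-rich sequence of training data $(\bx_k)_{k\in\N}$ (with associated responses $y$) and an arbitrary tolerance $\eps>0$. Let $(\WW_\ell)_{\ell\in\N}$ denote the output of Algorithm~\ref{alg:adaptive}. Since Algorithm~\ref{alg:adaptive} decreases the loss monotonically, there exists some smallest $\ell^\star\in\N$ with $\LL(\WW_{\ell^\star})\leq \eps$; set $F:=\RR(\WW_{\ell^\star})$ and let $\chi$ be the architecture underlying $\WW_{\ell^\star}$. This is the output I will feed into Definition~\ref{def:optgen}.

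Next I would quantify $\#\chi = \#\WW_{\ell^\star}$. By the complexity bound of Theorem~\ref{thm:main}, we have
\begin{align*}
    \LL(\WW_{\ell^\star}) \leq C_{\star}\,\#\WW_{\ell^\star}^{-1/\widetilde\beta},
\end{align*}
with $C_{\star}$ collecting the constants from Theorem~\ref{thm:main}. Rearranged and combined with the fact that $\LL(\WW_{\ell^\star-1})>\eps$, together with the uniform reduction factor $\kappa$ from Theorem~\ref{thm:inner} that implies $\LL(\WW_{\ell^\star})\geq \kappa\LL(\WW_{\ell^\star-1})> \kappa\eps$, this yields $\#\WW_{\ell^\star}\leq (C_\star/(\kappa\eps))^{\widetilde\beta}=C'\eps^{-\widetilde\beta}$. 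Invoking the standing assumption $\#\chi(\eps)\geq C\eps^{-\widetilde\beta}$, we conclude
\begin{align*}
    \#\chi = \#\WW_{\ell^\star}\leq C_1\,\#\chi(\eps)
\end{align*}
for the constant $C_1:=C'/C$, which is independent of $\eps$.

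Finally, I would invoke $\eps$-richness: by Definition~\ref{def:epsrich}, the constant $C_1$ above produces a constant $C_2>0$ such that $\LL(F,\bx_k)\leq \eps$ implies $\GG(F)\leq C_2\eps$ for almost all $k$. Since $F$ satisfies $\LL(F)\leq \eps$ by construction, we obtain $\GG(F)\leq C_2\eps$, which is precisely the defining property of optimal generalization (Definition~\ref{def:optgen}). The main obstacle I anticipate is purely bookkeeping: one must verify that the constant $C_1$ arising from the Theorem~\ref{thm:main} bound truly depends only on the algorithm parameters (notably $\kappa$, $L$, $C_{\rm desc}$, $C_{\rm opt}$, $\mu$, $\delta$) and not on $\eps$ or on the particular $\bx_k$, so that the implication in Definition~\ref{def:epsrich} is triggered with a uniform constant and the conclusion holds for all $\eps$ simultaneously.
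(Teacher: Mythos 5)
Your overall strategy matches the paper's: pick an iterate with $\LL\leq\eps$, bound its parameter count by $\eps^{-\widetilde\beta}$, compare with the lower bound $\#\chi(\eps)\geq C\eps^{-\widetilde\beta}$ to get $\#\chi\leq C_1\#\chi(\eps)$, and then invoke $\eps$-richness and Definition~\ref{def:epsrich}. However, there is a gap in the step where you bound $\#\WW_{\ell^\star}$.

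You assert ``the uniform reduction factor $\kappa$ from Theorem~\ref{thm:inner} implies $\LL(\WW_{\ell^\star})\geq \kappa\LL(\WW_{\ell^\star-1})$.'' This is the wrong direction: Theorem~\ref{thm:inner} (and equation~\eqref{eq:reduction} in the proof of Theorem~\ref{thm:main}) give the \emph{upper} bound $\LL(\WW_{\ell+1})\leq\kappa\LL(\WW_\ell)$, i.e.\ the loss is guaranteed to drop by at least the factor $\kappa$. There is no guaranteed \emph{lower} bound on $\LL(\WW_{\ell+1})$ in terms of $\LL(\WW_\ell)$: the optimizer in Step~1 of Algorithm~\ref{alg:adaptive} is only required not to increase the loss and could in principle drop it far below $\kappa\LL(\WW_\ell)$. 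So you cannot deduce $\LL(\WW_{\ell^\star})>\kappa\eps$, and the chain $\#\WW_{\ell^\star}\leq (C_\star/\LL(\WW_{\ell^\star}))^{\widetilde\beta}\leq (C_\star/(\kappa\eps))^{\widetilde\beta}$ is not justified.

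The way to patch this is to apply the Theorem~\ref{thm:main} bound at index $\ell^\star-1$ rather than $\ell^\star$: since $\LL(\WW_{\ell^\star-1})>\eps$ by minimality, the estimate $\LL(\WW_{\ell^\star-1})\leq C_\star\#\WW_{\ell^\star-1}^{-1/\widetilde\beta}$ yields $\#\WW_{\ell^\star-1}\leq (C_\star/\eps)^{\widetilde\beta}$ directly, with no need for a lower bound on $\LL(\WW_{\ell^\star})$. Then you control the one-step growth $\#\WW_{\ell^\star}-\#\WW_{\ell^\star-1}$ using the inequality established inside the proof of Theorem~\ref{thm:main},
\[
  \#\WW_{\ell^\star}\leq LC_{\rm desc}C_{\rm opt}\big(2\LL(\WW_0)/\mu\big)^{\beta}\LL(\WW_{\ell^\star-1})^{-\beta}+(1+LC_{\rm desc}\delta)\,\#\WW_{\ell^\star-1}\lesssim \eps^{-\widetilde\beta},
\]
since $\LL(\WW_{\ell^\star-1})>\eps$ and $\beta\leq\widetilde\beta$. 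Note that the paper's own proof is very terse here --- it simply asserts that Theorem~\ref{thm:main} produces an $\ell$ with both $\LL(F_\ell)\leq\eps$ and $\#\chi_\ell\leq C_\beta\eps^{-\widetilde\beta}$ --- so you correctly identified that something must be argued, but the specific lemma you reached for does not exist in the paper and in fact points the wrong way.
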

\begin{proof}
    Let $\chi_\ell$ denote the architecture and $F_\ell\in\RR(\chi_\ell)$ the corresponding network generated by Algorithm~\ref{alg:adaptive} in the $\ell$-th step. According to Theorem~\ref{thm:main}, given $\eps>0$, there exists $\ell\in\N$ with 
    \begin{align*}
        \LL(F_\ell)\leq \eps
    \quad\text{and}\quad\#\chi_\ell = \#\WW_\ell\leq C_\beta\eps^{-\widetilde\beta}\leq  C_\beta C^{-1} \#\chi(\eps),
    \end{align*}
    where $C_\beta>0$ denotes the constant from Theorem~\ref{thm:main}.
    Assuming an $\eps$-rich sequence of training data, this implies $\GG(F_\ell)\leq C_2\eps$ and hence concludes the proof.
\end{proof}
Note that $\#\chi(\eps)\simeq \eps^{-\beta}$ only implies the existence of networks $F$ that satisfy $\GG(F)\lesssim (\#F)^{-\beta}$. This, however, does not necessarily imply that Theorem~\eqref{thm:main} holds with $\beta$ even if $\LL(F)\lesssim \GG(F)$. The reason is that Theorem~\ref{thm:main} requires networks that can be added to the current iteration $\RR(\WW_\ell)$ and improve the loss, which is a stronger assumption. Whether one can remove this gap is still an open question.

\section{Numerical examples}
In all the numerical examples below, we are interested in minimizing the loss on the training data. For comparison with classical (polynomial) approximation methods, we plot the error
\begin{align*}
    {\rm error}:=\sqrt{\LL(\WW_\ell)}
\end{align*}
instead of the loss itself. The examples are implemented in \emph{TensorFlow} 2.0 and use the \emph{Adam} optimizer. For all computations, we compare Algorithm~\ref{alg:adaptive} with a direct approach. The direct approaches receive $4\cdot 10^5$ training epochs, where the hierarchical approach of Algorithm~\ref{alg:adaptive} receives $2\cdot 10^3$ epochs per iteration of the outer loop of which there are not more than $10^2$ in each example.

We start with the simple task of approximating $f(x,y):= (x+y)^2/2$ on the unit square.
We compare Algorithm~\ref{alg:adaptive} with a standard approach of training networks of a given size from scratch. All networks have one hidden layer and only vary in width, i.e., $\chi=(1,2,w,1)$ for some $w\in \N$. The starting point for Algorithm~\ref{alg:adaptive} is $w=2$ and for the optimization in~\eqref{eq:Wstar}, we use
 $\chi^\star=(1,2,3,1)$.
 
 The idea behind this architecture is that it can represent $\eta(\bz\cdot x)$, where $\bz\in\R^2$ is an arbitrary vector and $\eta\colon \R\to \R$ is a standard hat function. If $g\colon \R\to \R$ is a piecewise linear interpolation of $x^2$ 
 with $k$ nodes, the architecture $\chi=(1,2,3k,1)$ can thus represent the approximation $g(x+y)\approx f(x,y)$ exactly. It is well known that $\norm{g-f}{L^2([0,1]^2)}=\mathcal{O}(k^{-2})$ for optimal interpolation. Hence, we expect $\sqrt{\LL(\WW_\ell)}=\mathcal{O}(\#\chi^{-2})$ with optimal training.

 Figure~\ref{fig:conv1} confirms this rate and shows that the hierarchical strategy of Algorithm~\ref{alg:adaptive} is clearly superior to the direct approach. To reduce the noise coming from the random nature of stochastic gradient descent and the random initialization, we show the average error over ten training runs.
 The directly trained networks are initialized with random numbers drawn from a standard normal distribution with variance $w^{-1/2}$ (this is standard to obtain the correct scaling).

 \medskip

 In Figure~\ref{fig:conv2}, we repeat the experiment on $[0,1]^{10}$ with the function $f(\bx):=(\sum_{i=1}^{10} x_i)^2/10$. The starting architecture is $\chi=(10,2,2,1)$ and we apply Algorithm~\ref{alg:inner} to the final layer of the network, i.e., $\chi^\star=(1,2,3,1)$ as explained in Section~\ref{sec:partial}. The idea behind this architecture is that the network should first learn the direction of importance, in this case $\bz=(1,\ldots,1)$, and then approximate $x\mapsto x^2$. Compared  to the one layer approach, this saves a significant number of parameters. Figure~\ref{fig:conv2} confirms the optimal behavior of Algorithm~\ref{alg:adaptive} even for this high-dimensional example.
Again, we compare the hierarchical approach with direct training of networks with architecture $\chi^\star=(2,10,2,w,1)$ for $w\in\{10,20,40\}$ and confirm the superiority of Algorithm~\ref{alg:adaptive}.

\medskip

Note that the network architectures of the direct approach and the hierarchical approach of Algorithm~\ref{alg:adaptive} are identical and both training methods could find optimal weights. The experiments in this section show, however, that with random initialization the hierarchical training will reliable produce optimal approximation results, while the direct approach seems to get stuck in local minima.

\begin{figure}
\begin{center}
     \includegraphics[width=0.49\textwidth]{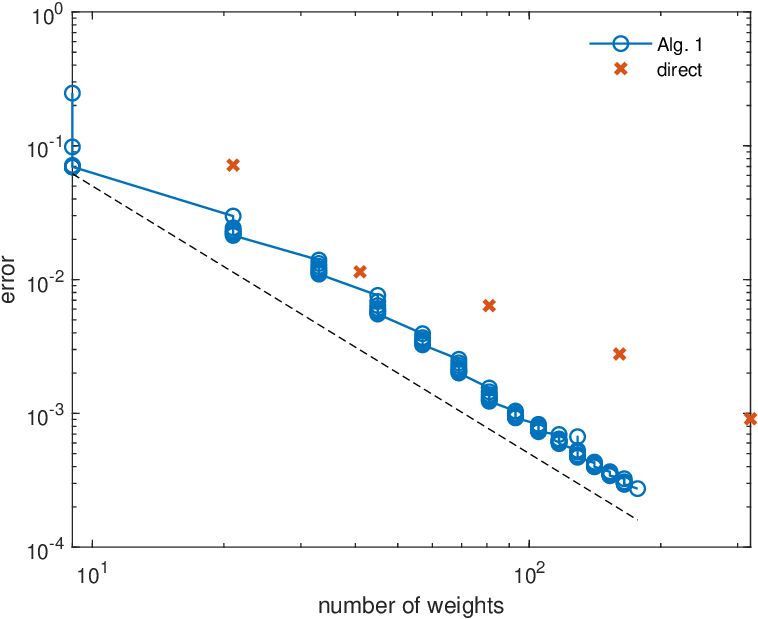}
     \includegraphics[width=0.49\textwidth]{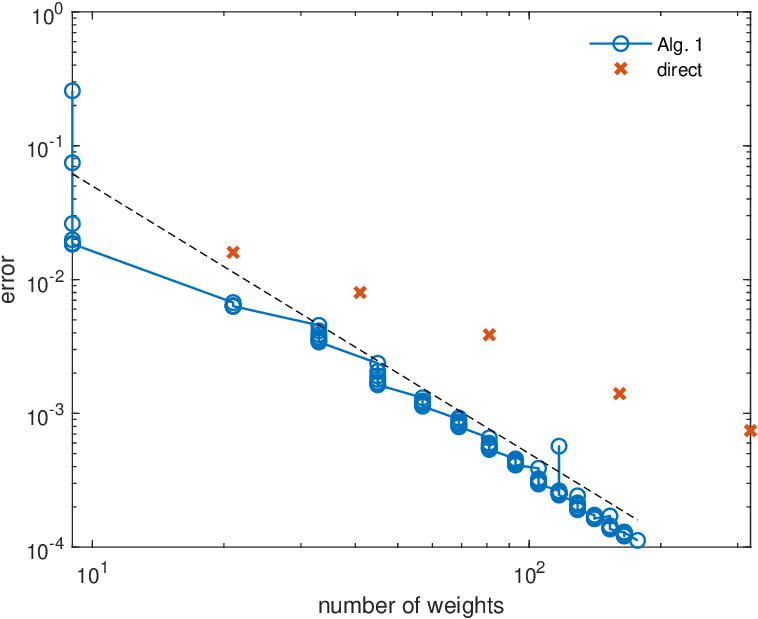}\\
      \includegraphics[width=0.49\textwidth]{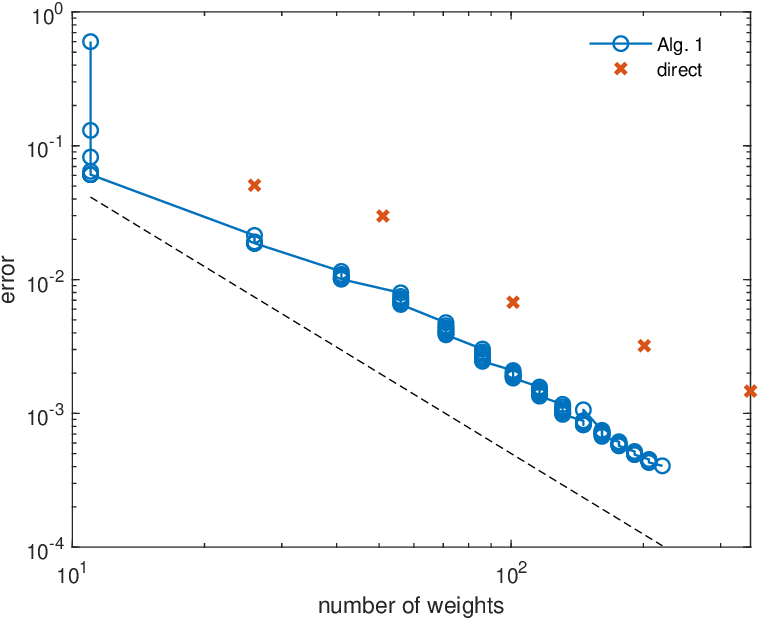}
    \caption{Comparison of adaptive and uniform algorithm for $f(x,y)=(x+y)^2$ (left), $f(x,y,z )=(x+y+z)^2/3$ (bottom), and $f(x,y)=(x+y)^{2/3}$ (right). We plot the average error over ten training runs. The dashed line represents $\mathcal{O}(n^{-2})$.}
    \label{fig:conv1}
    \end{center}
\end{figure}

\begin{figure}
\begin{center}
    \includegraphics[width=0.7\textwidth]{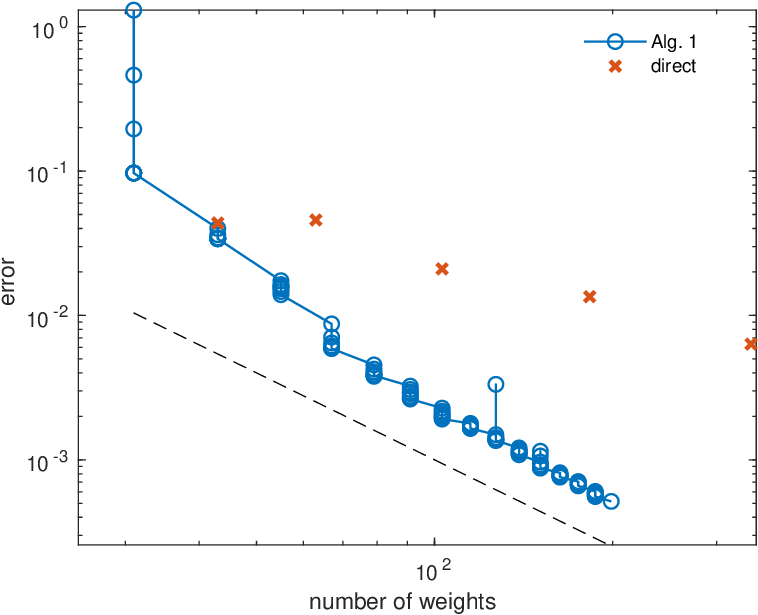}
    \caption{Comparison of adaptive and uniform algorithm for $f(\bx)=(\sum_{i=1}^{10}x_i)^2/10$. We plot the average error over ten training runs. The dashed line represents $\mathcal{O}(n^{-2})$.}
    \label{fig:conv2}
    \end{center} 
\end{figure}

\bibliographystyle{plain}
\bibliography{literature}
\end{document}